\numberwithin{equation}{section}
\theoremstyle{plain}
\newtheorem{theorem}{Theorem}[section]
\theoremstyle{definition}
\def\Dir{\operatorname{Dir}}
\def\vMF{\operatorname{vMF}}
\def\Mult{\operatorname{Mult}}
\newcommand{\Mag}[1]{\lvert \lvert #1 \rvert \rvert}
\newcommand{\iid}{\stackrel{\mathrm{iid}}{\sim}}
\newcommand{\etr}{\mathrm{etr}}
\newcommand{\trace}{\operatorname{tr}}
\numberwithin{table}{section}
\numberwithin{figure}{section}
\begin{document}

\title{Learning Subspaces of Different Dimensions}
\author[B.~St.~Thomas]{Brian~St.~Thomas}
\address{Department of Statistical Science, Duke University, Durham, NC 27708-0251.}
\email{brian.st.thomas@duke.edu}
\author[L.~Lin]{Lizhen Lin}
\address{Department of Statistics and Data Sciences, The University of Texas at Austin, Austin, TX 78746.}
\email{lizhen.lin@austin.utexas.edu}
\author[L.-H.~Lim]{Lek-Heng Lim}
\address{Computational and Applied Mathematics Initiative, Department of Statistics,
University of Chicago, Chicago, IL 60637-1514.}
\email{lekheng@galton.uchicago.edu}
\author[S.~Mukherjee]{Sayan Mukherjee}
\address{Departments of Statistical Science, Mathematics, Computer Science, Duke University, Durham, NC 27708-0251.}
\email{sayan@stat.duke.edu}

\begin{abstract}
We introduce a Bayesian model for inferring mixtures of subspaces of different dimensions. The key challenge in such a mixture model is specification of prior distributions over subspaces of different dimensions. We address this challenge by embedding subspaces or Grassmann manifolds into a sphere of relatively low dimension and specifying priors on the sphere. We provide an efficient sampling algorithm for the posterior distribution of the model parameters. We illustrate that a simple extension of our mixture of subspaces model can be applied to topic modeling. We also prove posterior consistency for the mixture of subspaces model.
The utility of our approach is demonstrated with applications to real and simulated data.
\end{abstract}
\maketitle

\section{Introduction}

The problem of modeling manifolds has been of great interest in a variety of statistical problems including dimension reduction \cite{misha1,donoho03hessian,lle}, characterizing the distributions of statistical models as points on a Riemannian manifold \cite{Amari82,Efron78,Rao45}, and the extensive literature in statistics and machine learning on manifold learning \cite{Cook2007,GK2006,MukZhouWu}. A generalization of the manifold setting is to model unions and intersections of manifolds (of possibly different dimensions), formally called stratified spaces \cite{BenMukWang2012,Geiger01,GorMac1988}. Stratified spaces arise when data or parameter spaces are characterized by combinations of manifolds such as the case of mixture models. One of the most important special cases arises when the manifolds involved are all  affine subspaces or linear subspaces. Mixtures of linear subspaces have been suggested in applications such as tracking images \cite{HarRanSap2005,VidMaSas2005}, quantitative analysis of evolution or artificial selection
\cite{HansenHoule,Lande}, applications in communication and coding theory \cite{AshikhminCalderbank,ZhengTse},
and is relevant for text modeling \cite{reisinger10,blei03}.In this paper we provide a model for the simplest instance of inferring stratified spaces, estimating mixtures of linear subspaces of different dimensions.

The idea of dimension reduction via projections onto low-dimensional subspaces goes back at least to Adcock \cite{Adcock1878}  and Edgworth \cite{Edgeworth1884}, with methodological and foundational contributions by R.~A.~Fisher \cite{Fisher1922}; see \cite{Cook2007} for an excellent review. It is very interesting that in 1922 Fisher suggested that the statistical setting where the number of variables is greater than the number of observations, $p \gg n$ could be addressed by reducing the dimension of $p$ to very few $p^*$ summaries of the data where $p^* < n.$ The summaries in this setting were linear combinations of the variables. This idea of dimension reduction has been extensively used statistics ranging from classical methods such as principal components analysis (PCA) \cite{hotelling-33} to a variety of recent methods, some algorithmic and some likelihood based, that fall under the category of nonlinear dimension reduction and manifold learning \cite{misha1,Cook2007,donoho03hessian,GK2006,MukZhouWu,lle}. A challenging setting for both algorithmic and probabilistic models in this setting is where the data are being generated from multiple populations inducing a mixture distribution. It is particularly challenging when the mixtures are of different dimensions.

In many applications a useful model for the observed high-dimensional data assumes the data is concentrated around a lower-dimensional structure in the high-dimensional ambient space. In addition, it is often the case that the data is generated from multiple processes or populations each of which has low-dimensional structure. In general, the degrees of freedom or number of parameters of the processes capturing the different populations need not be equal. In this paper, we address this problem of modeling data arising from a mixture of manifolds of different dimensions for the restricted case where the manifolds are linear subspaces.

The most recent work that offers both estimators and provides guarantees on estimates for inferring mixtures of subspaces has been limited to \textit{equidimensional} subspaces \cite{LerZha2010,page13}. A Bayesian procedure for mixtures of subspaces of equal dimensions was developed in  Page et al.\ \cite{page13}. A penalized loss based procedure was introduced in
Lerman and Zhang \cite{LerZha2010} to learn mixtures of $K$-flats. There are significant difficulties in extending either
approach to subspaces of different dimensions. The key difficulty in extending either approach is addressing the singularity introduced in moving between subspaces of different dimensions when one parameterizes a subspace as a point on the Grassmann manifold and uses the natural geodesic on this manifold. This difficulty appears in the Bayesian approach as requiring the posterior samples to come from models of different dimensions which will require methods such as
reversible jump MCMC which may cause mixing problems. The difficulty is immediate in the penalized loss model as the loss is based on a distance to subspaces and if the dimensions of the subspaces vary the loss based procedure becomes very difficult.
A related line of work appears in Hoff \cite{hoff2009} where efficient methodology is developed to sample from a space of orthonormal matrices with fixed intrinsic dimension using the matrix Bingham--von~Mises--Fisher distribution. One of the
results in this paper is a procedure to sample from orthonormal matrices of varying intrinsic dimensions. From a geometric perspective a method was developed in \cite{hoff2009} to simulate from a Stiefel manifold with fixed intrinsic dimension, in 
this paper we provide a methodology to simulate over  Stiefel manifolds of varying intrinsic dimensions.

The key idea we develop in this paper is that subspaces of different dimensions $1,2,\dots,m$ can be embedded into a sphere of relatively low dimension $\mathbb{S}^{(m-1)(m+2)/2}$  where chordal distances on the sphere can be used to
compute distances between subspaces of differing dimensions \cite{Conwayetal}. This embedding removes the discontinuity that occurs in moving between subspaces of different dimensions when one uses the natural metric for a Grassmann manifold. The other tool we make use of is a Gibbs posterior \cite{JiangTanner08} which allows us to efficiently obtain posterior samples of the model parameters. 

The structure of the paper is as follows. In Section~\ref{mixsub} we state a likelihood model for a mixture of $k$ subspaces each of dimension $d_k$. In Section~\ref{conway} we define the embedding procedure we use to model subspaces of different dimensions and specify the model with respect to the likelihood and prior. In Section~\ref{post} we provide an algorithm for sampling from the posterior distribution. For some of the parameters standard methods will not be sufficient for efficient sampling and we use a Gibbs posterior for efficient sampling. In Section ~\ref{topicmodels} we extend the mixture of subspaces model to topic modeling. In Section~\ref{consist} a frequentist analysis of the Bayesian procedure is given that proves posterior consistency of the procedure. In Section~\ref{data} we use simulated data to provide an empirical analysis of the model and then we use real data to show the utility of the model.We close with a discussion.

\section{Model specification}
\label{mixsub}

\subsection{Notation}
We first specify notation for the geometric objects used throughout this paper. The Grassmann manifold or Grassmannian of $d$-dimensional subspaces in $\mathbb{R}^m$ will be denoted $\operatorname{Gr}(d,m)$. The Stiefel manifold of $m \times d$ matrices with orthonormal columns will be denoted $\operatorname{V}(d,m)$ and when $d=m$ we write $\operatorname{O}(d)$ for the orthogonal group. We use boldfaced uppercase letters, e.g., $\mathbf{U}$, to denote subspaces and the corresponding letter in normal typeface, e.g., $U$, to denote the matrix whose columns form an orthonormal basis for the respective subspace. Note that $\mathbf{U} \in \operatorname{Gr}(d,m)$ and $U \in \operatorname{V}(d,m)$. A subspace has infinitely many different orthonormal bases, related to one another by the equivalence relation $U' = UX$ where $X\in \operatorname{O}(d)$. We identify a subspace $\mathbf{U}$ with the equivalence class of all its orthonormal bases $\{ U X \in  \operatorname{V}(m,d) : X \in  \operatorname{O}(d)\}$ thereby allowing the identification  $\operatorname{Gr}(d,m) = \operatorname{V}(d,m)/ \operatorname{O}(d)$.

In this article, the dimension of the ambient space $m$ will always be fixed but our discussions will often involve multiple copies of Grassmannians   $\operatorname{Gr}(d,m) $ with different values of $d$. We will use the term `Grassmannian of dimension $d$' when referring to   $\operatorname{Gr}(d,m) $ even though as a manifold, $\dim \operatorname{Gr}(d,m) = d(m-d)$.

\subsection{Likelihood specification} We consider data $X = (x_1,\dots,x_n)$ drawn independent and identically from a mixture of $K$ subspaces where each observation $x_i$ is measured in the ambient space $\mathbb{R}^m$. We assume that each population is concentrated near a linear subspace $\mathbf{U}_k$ which we represent with an orthonormal basis $U_k$, $\mathbf{U}_k = \operatorname{span}(U_k)$, $k =1,\dots, K$.

We first state the likelihood of a sample conditional on the mixture component. Each mixture component is modeled using a $d_k$-dimensional normal distribution to capture the subspace and a $m-d_k$-dimensional normal distribution to model the residual error or null space:
\begin{displaymath}
U_k^\mathsf{T}x \sim \mathcal{N}_{d_k}(\mu_k, \Sigma_{k}), \quad V_k^\mathsf{T}x \sim \mathcal{N}_{m-d_k}(V_k^\mathsf{T}\theta_k, \sigma^2_k I),
\end{displaymath}
where $U_k$ is the orthonormal basis for the $k$th component and is modeled by a multivariate normal with mean $\mu_k$ and covariance
$\Sigma_k$ and $V_k $ is the basis for the null space $\operatorname{ker}(U_k)$ which models the residual error as multivariate normal with variance
$\sigma^2_k I$. We are estimating affine subspaces so the parameter $\theta_k$ serves as a location parameter for the component and by construction $\theta_k \in V_k$. Also note that without loss of generality we can assume that $\Sigma_{k}$ is diagonal since we may
diagonalize the covariance matrix $\Sigma_{k} = Q_kD_kQ_k^\mathsf{T}$ and rotate $U_k$ by $Q_k$ resulting in a parameterization that depends on
$U_k$ and a diagonal matrix.The distributions for the null space and and subspace can be combined and specified by either of 
the following parameterizations
\begin{equation}
x \sim  \begin{cases}
 \mathcal{N}_m \left(U_k\mu_k + \theta_k, U_k\Sigma_{k}U_k^\mathsf{T} + \sigma^2_kV_kV_k^\mathsf{T}\right) \\
 \mathcal{N}_m \left(U_k\mu_k + \theta_k, U_k(\Sigma_{k}-\sigma_k^2I_{d_k})U_k^\mathsf{T} +\sigma_k^2I_m \right).
\end{cases}
\end{equation}
It will be convenient for us to use the second parameterization for our likelihood model.

Given the above likelihood model for a component we can specify the following mixture model
\begin{equation}
x \sim \sum_{k = 1}^K w_k \, \mathcal{N}_m \left(U_k \mu_k + \theta_k, U_k(\Sigma_{k}-\sigma_k^2I_{d_k})U_k^\mathsf{T} +\sigma_k^2I_m \right),
\end{equation}
where $w = (w_1,\dots,w_K)$ is a probability vector and we assume $K$ components. We will use a latent or auxiliary variable approach
to sample from the above mixture model and specify a $K$-dimensional vector $z$ with a single entry of $1$ and all other entries of zero, $\delta \sim \operatorname{Mult}(1,w)$. The conditional probability of $x$ given the latent variable is
$$x\mid \delta \sim \sum_{k=1}^K \delta_k \mathcal{N}_m \left( U_k\mu_k + \theta_k, U_k(\Sigma_{k}-\sigma_k^2I_{d_k})U_k^\mathsf{T} +\sigma_k^2I_m\right).$$

\subsection{Prior specification and the spherical embedding}
\label{conway}

The parameters in the likelihood for each component are $(\theta_k,\Sigma_k,\sigma_k^2,U_k,\mu_k,d_k)$ and the mixture parameters are weights $w$. Again we fix the number of mixtures at $K$. Prior specification for some of these parameters
are straightforward. For example the location parameter $\theta_k$ is normal, the variance terms $\Sigma_k$ and $\sigma_k^2$ are Gamma, and the mixture weights are Dirichlet. A prior distribution for each triple $(U_k,\mu_k,d_k)$ is less obvious.

The inherent difficulty in sampling this triple is that we do not want to fix the dimension of the subspace $d_k$, we want to consider $d_k$ as random. We can state the following joint  prior on the triple $(U_k,\mu_k,d_k)$
\[
\pi(U_k,\mu_k,d_k) = \pi(U_k \mid d_k) \, \pi(\mu_k \mid d_k) \, \pi(d_k).
\]
Given $d_k$ we can specify $\mu_k \mid d_k$ as a multivariate normal of dimension $d_k$. Given $d_k$ we can also specify a conjugate distribution for $U_k$ as the  von~Mises--Fisher (MF) distribution
\[
\operatorname{MF}(U_k\mid A) \propto \etr(A^\mathsf{T}U_k),
\]
where $\etr$ is the exponential trace operator.  The matrix von Mises--Fisher distribution is a spherical distribution over the set of all $m\times d_k$ matrices, also known as the Stiefel manifold which we denote as $\operatorname{V}(d_k,m)$. A prior on $d_k$ would take values over $[0,\dots,m]$ and for each value the conditional
distributions $\pi(U_k \mid d_k)$ and  $\pi(\mu_k \mid d_k)$ need to be specified. For $\mu_k$ a prior distribution of $\mathcal{N}_{d_k} (0, \lambda I)$ seems reasonable since we can assume the mean is zero and the entries independent for any $d_k$. Specifying the conditional distribution for $\pi(U_k \mid d_k)$ is not as clear. As $d_k$ changes the dimension of the matrix $A$ needs to change and one cannot simply add columns of zeroes since columns need to be orthonormal. An additional constraint on the prior is that a small change in dimension $d_k$ should only change the prior on  $U_k$ slightly. This constraint is to avoid model fitting inconsistencies. This constraint highlights  the key difficulty in prior specification over subspaces of different dimensions: how to measure the distance between subspaces of different dimensions. Note that we can not simply integrate out $d_k$ or $U_k$ as nuisance parameters since we have no prior specification.

We will use the geometry of the subspace $\mathbf{U}_k$ to specify an appropriate joint prior on $(U_k, d_k)$. Recall that the set of all $d_k$-dimensional linear subspaces in $\mathbb{R}^m$ is the Grassmann manifold $\operatorname{Gr}(d_k,m)$ and that we represent a subspace $\mathbf{U}_k \in \operatorname{Gr}(d_k,m)$ with an orthonormal matrix $U_k \in \operatorname{V}(d_k,m)$ from an equivalence class $\{U_k \in \operatorname{V}(k,m) : \operatorname{span}(U_k) = \mathbf{U}_k \}$. We need to place priors on Grassmanians of different dimension $d_k$. The key tool we use to specify such a prior is the embedding of $\operatorname{Gr}(d_k, m)$ into $\mathbb{S}^{(m-1)(m+2)/2}$, an appropriately chosen sphere\footnote{Note that the dimension of a sphere in $\mathbb{R}^d$ is $d-1$ and that $m(m+1)/2 -1 = (m-1)(m+2)/2$.} in $\mathbb{R}^{m(m+1)/2}$, as proposed in Conway et al.\ \cite{Conwayetal}.
This embedding allows us to embed  subspaces of different dimensions into the same space and measure distances between the embedded subspaces as a function of only the ambient (embedded) space. We will use this embedding to place a prior on $U_k$ which implicitly specifies a prior on $d_k$. This embedding will have some very nice properties in terms of prior specification and computation.

The following theorem states that embedding the Grassmanian into a sphere allows us to measure distances between subspaces.
\begin{theorem}[Conway--Hardin--Sloane 1996]
The representation of a subspace $\mathbf{U} \in \operatorname{Gr}(d,m)$ by its projection matrix $P_\mathbf{U}$ gives an isometric embedding of $\operatorname{Gr}(d,m)$ into a sphere of radius $\sqrt{d(m-d)/m}$ in $\mathbb{R}^{m(m+1)/2}$, with $d_p(\mathbf{U},\mathbf{V}) = \frac{1}{\sqrt{2}}\lVert P_\mathbf{U}-P_\mathbf{V} \rVert_F$, where $P_\mathbf{V}$ is the projection matrix onto $\mathbf{V}$.
\end{theorem}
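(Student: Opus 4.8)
The plan is to treat the projection-matrix map as a map into the Euclidean space $\operatorname{Sym}(m)$ of real symmetric $m\times m$ matrices, identified with $\mathbb{R}^{m(m+1)/2}$ through the inner product $\langle A,B\rangle=\trace(AB)$, whose induced norm is the Frobenius norm. Write $g\colon \mathbf{U}\mapsto P_{\mathbf{U}}$ for the orthogonal projector onto $\mathbf{U}$. Since $P_{\mathbf{U}}=UU^{\mathsf T}$ does not depend on the choice of orthonormal basis $U$ of $\mathbf{U}$, and $\mathbf{U}$ is recovered as the column space of $P_{\mathbf{U}}$, the map $g$ is a well-defined smooth injection; the tangent computation in the last step shows $\mathrm{d}g$ is injective as well, so $g$ is an embedding of manifolds. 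It then remains to (a) place the image on a sphere, (b) verify the distance formula, and (c) check the isometry (up to scale) of $g$.

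For (a) I would use only the two defining relations $P_{\mathbf{U}}^{2}=P_{\mathbf{U}}$ and $\trace P_{\mathbf{U}}=\operatorname{rank}P_{\mathbf{U}}=d$. Expanding,
\[
\Bigl\lVert P_{\mathbf{U}}-\tfrac{d}{m}I_{m}\Bigr\rVert_{F}^{2}
=\trace P_{\mathbf{U}}-\tfrac{2d}{m}\trace P_{\mathbf{U}}+\tfrac{d^{2}}{m^{2}}\trace I_{m}
= d-\frac{d^{2}}{m}=\frac{d(m-d)}{m},
\]
so every rank-$d$ projector lies on the sphere of radius $\sqrt{d(m-d)/m}$ centered at $\tfrac{d}{m}I_{m}$, inside the affine hyperplane $\{A\in\operatorname{Sym}(m):\trace A=d\}$, which is where the dimension count $m(m+1)/2-1=(m-1)(m+2)/2$ enters. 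For (b), with $\dim\mathbf{U}=\dim\mathbf{V}=d$ I would expand $\lVert P_{\mathbf{U}}-P_{\mathbf{V}}\rVert_{F}^{2}=\trace P_{\mathbf{U}}+\trace P_{\mathbf{V}}-2\trace(P_{\mathbf{U}}P_{\mathbf{V}})=2d-2\lVert U^{\mathsf T}V\rVert_{F}^{2}$, using $\trace(UU^{\mathsf T}VV^{\mathsf T})=\lVert U^{\mathsf T}V\rVert_{F}^{2}$. The Cosine--Sine (SVD) decomposition $U^{\mathsf T}V=\Phi(\cos\Theta)\Psi^{\mathsf T}$ identifies the singular values of $U^{\mathsf T}V$ with the cosines of the principal angles $\theta_{1},\dots,\theta_{d}$ between $\mathbf{U}$ and $\mathbf{V}$, so $\lVert P_{\mathbf{U}}-P_{\mathbf{V}}\rVert_{F}^{2}=2\sum_{i=1}^{d}(1-\cos^{2}\theta_{i})=2\sum_{i=1}^{d}\sin^{2}\theta_{i}$, and hence $\tfrac{1}{\sqrt2}\lVert P_{\mathbf{U}}-P_{\mathbf{V}}\rVert_{F}=\bigl(\sum_{i}\sin^{2}\theta_{i}\bigr)^{1/2}$, which is precisely the chordal (projection) distance $d_{p}(\mathbf{U},\mathbf{V})$.

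For (c) I would identify $T_{\mathbf{U}}\operatorname{Gr}(d,m)$ with the horizontal space $\{H\in\mathbb{R}^{m\times d}:U^{\mathsf T}H=0\}$ carrying the canonical metric $\lVert H\rVert_{F}^{2}$. Differentiating $P(t)=U(t)U(t)^{\mathsf T}$ along a horizontal curve with $U(0)=U$ and $\dot U(0)=H$ gives $\dot P(0)=HU^{\mathsf T}+UH^{\mathsf T}$, and using $U^{\mathsf T}U=I_{d}$ together with $U^{\mathsf T}H=0$ one obtains $\lVert\dot P(0)\rVert_{F}^{2}=2\lVert H\rVert_{F}^{2}$. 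Thus $\mathrm{d}g$ is everywhere a fixed rescaling by $\sqrt2$, so $g$ is homothetic --- an isometric embedding after the harmless normalization by $1/\sqrt2$ already visible in the distance formula --- and great circles of the sphere are the images of Grassmann geodesics, with geodesic distances transformed by the same factor.

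I expect the only genuinely non-mechanical points to be fixing conventions: deciding which normalization of the canonical Grassmannian metric makes the word ``isometric'' literally true (this is exactly the source of the $\sqrt2$), and stating the principal-angle identity $\trace(P_{\mathbf{U}}P_{\mathbf{V}})=\sum_{i}\cos^{2}\theta_{i}$ cleanly --- which reduces to $\trace(UU^{\mathsf T}VV^{\mathsf T})=\lVert U^{\mathsf T}V\rVert_{F}^{2}$ and the SVD of $U^{\mathsf T}V$. Everything else is bookkeeping with idempotents and the trace, and the injectivity of $\mathrm{d}g$ needed to call $g$ an embedding falls out for free from the computation $\lVert\dot P(0)\rVert_{F}^{2}=2\lVert H\rVert_{F}^{2}$ in step (c).
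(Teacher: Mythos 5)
The paper does not actually prove this theorem---it is quoted from Conway--Hardin--Sloane \cite{Conwayetal} and used as a black box---so there is no in-paper argument to compare against. Your proof is correct and complete: the computation $\lVert P_{\mathbf{U}}-\tfrac{d}{m}I_m\rVert_F^2=d-d^2/m$ places the image on the claimed sphere, the identity $\lVert P_{\mathbf{U}}-P_{\mathbf{V}}\rVert_F^2=2d-2\lVert U^{\mathsf T}V\rVert_F^2=2\sum_i\sin^2\theta_i$ gives the distance formula via the SVD characterization of principal angles, and the tangent computation $\lVert HU^{\mathsf T}+UH^{\mathsf T}\rVert_F^2=2\lVert H\rVert_F^2$ (using $U^{\mathsf T}H=0$, $U^{\mathsf T}U=I_d$) shows the map is a homothety with ratio $\sqrt2$, hence an isometry onto its image once distances are normalized by $1/\sqrt2$ as in the statement. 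One presentational caution: you identify $\operatorname{Sym}(m)$ with $\mathbb{R}^{m(m+1)/2}$ via the trace inner product, whereas the paper later uses raw $\operatorname{vech}$ coordinates, under which off-diagonal entries are counted once and the Euclidean norm is \emph{not} the Frobenius norm; your convention is the one that makes the stated radius and distance formula literally correct, so it is worth flagging that the two coordinate systems differ by a $\sqrt2$ rescaling of the off-diagonal coordinates.
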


The embedding procedure proceeds in the following steps: (1) given a basis $U_k$ compute the projection matrix $P_k = U_k^\mathsf{T} U_k$, (2)
take all the entries of $P_k$ in the upper triangle (or lower triangle) as well as all the elements in the diagonal except for one as a vector in
$\mathbb{R}^{m(m+1)/2-1}$. The sum of all the entries on the vector will be a constant, this is a result of the orthogonality of $U_k$, which means that all the subspaces of dimension $k$ lie on the same sphere. The key observation by Conway et al.\ \cite{Conwayetal} was that if the extra coordinate is included, thus embedding into $\mathbb{R}^{m(m+1)/2}$, the subspaces are still embedded into spheres and each of these spheres are cross sections of a higher-dimensional sphere which we denote as $\mathbb{S}^{(m-1)(m+2)/2}$. The sphere $\mathbb{S}^{(m-1)(m+2)/2}$ is centered at
$\varphi\left(\frac{1}{2}I_m \right) = \operatorname{vech}\left(\frac{1}{2}I_m\right)$ where $\varphi( A)$
denotes the embedding of the projection matrix $A$ and $\operatorname{vech}$ is the half-vectorization operation
\[
 \operatorname{vech} \left(\begin{bmatrix} a &b \\ b & d \end{bmatrix} \right) = \begin{bmatrix} a \\ b \\ d \end{bmatrix}.
\]
The $0$-dimensional subspace is embedded at the origin $\mathbf{0} \in \mathbb{R}^{m(m+1)/2}$. The radius of  $\mathbb{S}^{(m-1)(m+2)/2}$ is $\sqrt{{m(m+1)/8}}$. In summary,
\[
\mathbb{S}^{(m-1)(m+2)/2} = \{ x \in \mathbb{R}^{m(m+1)/2} : \lVert x - c \rVert^2 = m(m+1)/8 \}, \quad \text{where } c = \operatorname{vech}\left(\frac{1}{2}I_m\right).
\]
Grassmann manifolds
are embedded into cross-sections of $\mathbb{S}^{(m-1)(m+2)/2}$ where the projection matrix corresponding to the pre-image has an integer valued trace.
The geodesic distance along the surface of the sphere, $d_{\mathbb{S}^{(m-1)(m+2)/2}}$, corresponds to the projective distance $d_p(\,\cdot\, , \,\cdot\,)$ between two subspaces $\mathbf{U}_1,\mathbf{U}_2 \in \operatorname{Gr}(d,m)$,
\[
d_p(\mathbf{U}_1,\mathbf{U}_2) =\left[\sum\nolimits_{j =1}^d \sin^2(\theta_j)\right]^{1/2},
\]
where $\theta_{1},\dots,\theta_{d}$ are the principal angles between the
subspaces. We illustrate the embedding for two projection matrices in
Figure~\ref{fig1}.
\begin{figure}[h!]
\begin{center}
\includegraphics[scale=.32]{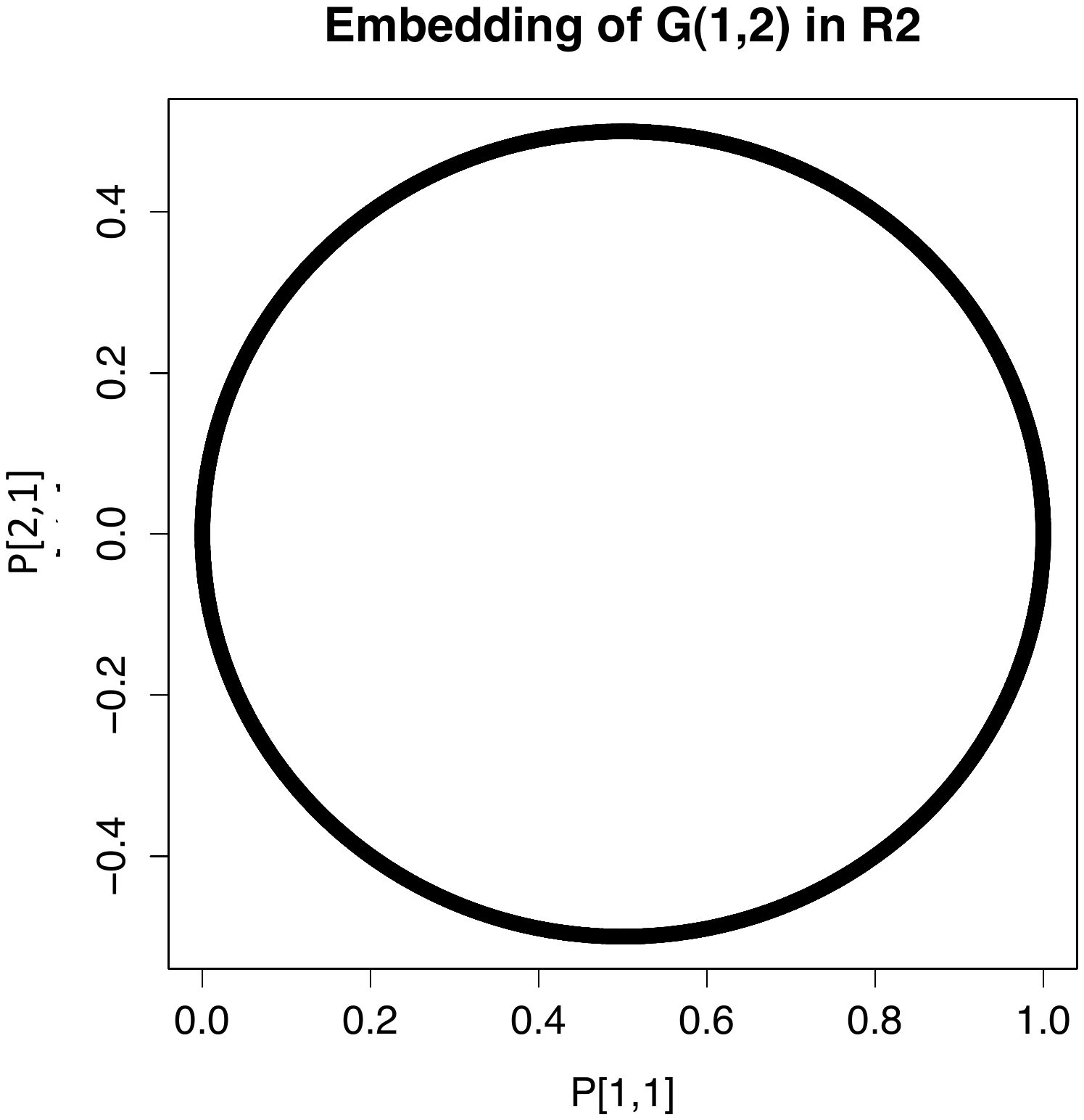} \includegraphics[scale=.32]{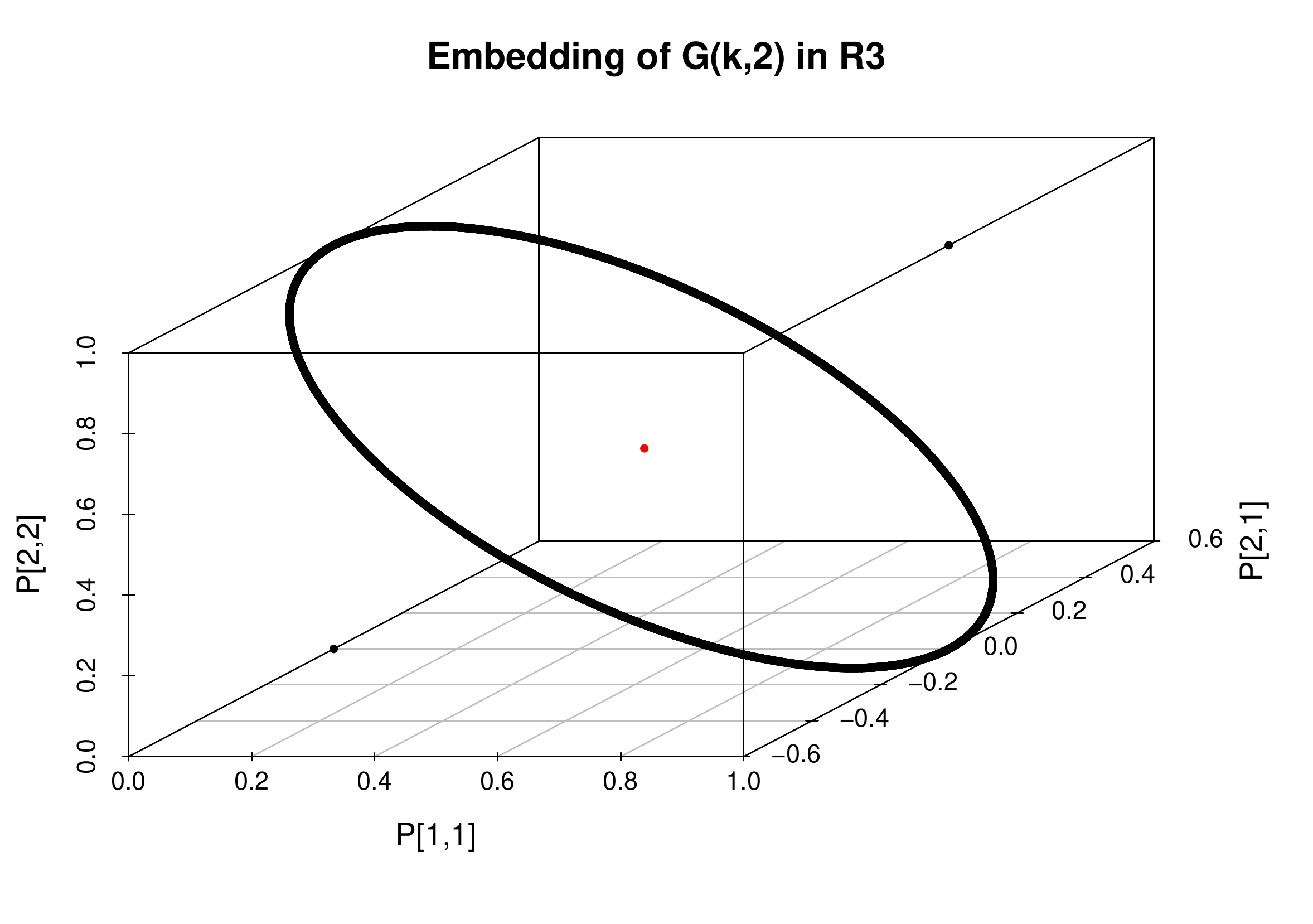}
\end{center}
\caption{An illustration of the spherical embedding for subspaces from $\operatorname{Gr}(1,2)$ into $\mathbb{R}^2$ on the left, and of $\operatorname{Gr}(k,2)$, $k=0,1,2$ on the right. Images of the embedding are in black, and the center of the sphere is in red. The coordinates for the embedding into $\mathbb{R}^2$ is the first column of the projection matrix. By including the last entry in the diagonal of the projection matrix, we obtain coordinates for the embedding into $\mathbb{R}^3$. $\operatorname{Gr}(0,2)$ and $\operatorname{Gr}(2,2)$ are trivial sets giving $\mathbf{0}_3$ and $I_3$ as their projection matrices. They act as poles on the sphere with coordinates $(0,0,0)$ and $(1,0,1)$.  }
\label{fig1}
 \end{figure}

The representation of Grassmannians as points on $\mathbb{S}^{(m-1)(m+2)/2}$ has several useful properties.
\begin{description}
\item[Sphere interpretation] The sphere $\mathbb{S}^{(m-1)(m+2)/2}$ provides an intuitive way to sample subspaces of different dimensions by sampling from $\mathbb{S}^{(m-1)(m+2)/2}$. Under the projective distance, the sphere also has an intuitive structure. For example, distances between subspaces of different dimensions can also be computed as the distance between points on the sphere, these points will be on different cross-sections. Under the projective distance, the orthogonal complement of a subspace $\mathbf{U}$ is the point on
$\mathbb{S}^{(m-1)(m+2)/2}$ that maximizes the projective distance. Further, the projection matrix is always invariant to the representation $U$.

\item[Differentiable] The projective distance however is square differentiable everywhere, making it more suitable in general for optimization problems. This is not the case for distances like geodesic distance or the Asimov--Golub--Van~Loan distance where maximizing the distance between a set of subspaces will result in distances that lie near non-differentiable points  \cite{Conwayetal}. This numerical instability can lead to sub-optimal solutions.

\item[Ease of computation] The projective distance is easy to compute via principal angles, which are in turn readily computable with singular value decomposition \cite{GV2013}. Working with the embedding requires only a relatively small number of coordinates --- in fact only quadratic in $m$ or  $m(m+1)/2$. Furthermore one can exploit many properties of a sphere in Euclidean space in our computations. For example
sampling from a sphere is simple. The number of required coordinates is small compared to alternative embeddings of the Grassmannian,
see \cite{HamLee2008}. In contrast the usual Pl\"{u}cker embedding requires a number of coordinates that is  ${m}\choose{d}$, i.e., exponential in $m$. Moreover the Pl\"{u}cker embedding does not reveal a clear relationship between Grassmannians of different dimensions, as there is using the spherical embedding.
\end{description}

We will place a prior on projection matrices by placing a distribution
over the lower half of $\mathbb{S}^{(m-1)(m+2)/2}$, points on $\mathbb{S}^{(m-1)(m+2)/2}$ corresponding to
cross-sections where the subspace corresponding to the pre-image has
dimension $d <  {m(m+1)/4}$. We only consider the lower half since we assume
the model to be low-dimensional. The prior over projection matrices
imples a prior over $U_k$ and $d_k$. A point drawn from $\mathbb{S}^{(m-1)(m+2)/2}$ may not
correspond to a subspace, recall only points with integer trace have
subspaces as a pre-image. We address this problem by the following procedure:
given a sampled point $q \in \mathbb{S}^{(m-1)(m+2)/2}$ we return the closest point $p \in
\mathbb{S}^{(m-1)(m+2)/2}$ that is the pre-image of a subspace. The following theorem states the procedure.

\begin{theorem}\label{project}
Given a point $q\in \mathbb{S}^\ell$, the point $p$ that minimizes the geodesic distance on $\mathbb{S}^\ell$, $d_{\mathbb{S}^\ell}(q,p)$, subject to
$$\varphi^{-1}(p) \in \bigcup_{d=0}^\ell \operatorname{Gr}(d,\ell)$$ can be found by the following procedure
\begin{enumerate}[\upshape (i)]
\item Compute $Q = \varphi^{-1}(q)$.
\item Set the dimension of $p$ to $d = \trace(Q)$.
\item Compute the eigendecomposition $Q = A\Lambda A^{-1}$.
\item Set $B$ an $\ell \times d$ matrix equal to the columns of $A$ corresponding to the top $d$ eigenvalues.
\item Let $p=\varphi (BB^\mathsf{T})$.
\end{enumerate}
\end{theorem}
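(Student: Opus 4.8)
The plan is to turn the spherical optimisation into a nearest--projection--matrix problem and then solve that by spectral truncation. \textbf{Step 1 (geodesic $\leftrightarrow$ chordal).} Both $q$ and every feasible $p=\varphi(P)$, $P$ an orthogonal projection, lie on the round sphere $\mathbb{S}^\ell$. On a sphere of radius $r$ and centre $c$ the geodesic distance between $x,y$ is $r\,\theta$ with $\theta=\arccos\bigl(\langle x-c,\,y-c\rangle/r^{2}\bigr)\in[0,\pi]$, while the ambient distance is $\lVert x-y\rVert=2r\sin(\theta/2)$; since $\theta\mapsto 2r\sin(\theta/2)$ is strictly increasing on $[0,\pi]$, minimising $d_{\mathbb{S}^\ell}(q,p)$ over the feasible set is the same as minimising $\lVert q-p\rVert$ over it, so I would first replace the geodesic objective by the Euclidean one.

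\textbf{Step 2 (pass to matrices).} By the Conway--Hardin--Sloane embedding $\varphi$ is, up to a fixed positive scalar, an isometry of $(\operatorname{Sym}(m),\lVert\cdot\rVert_F)$ onto Euclidean $\mathbb{R}^{m(m+1)/2}$, and the feasible $p$ are precisely the images of orthogonal projection matrices of all ranks $d\in\{0,\dots,m\}$. Writing $Q=\varphi^{-1}(q)$, the problem becomes: find the orthogonal projection $P$ minimising $\lVert Q-P\rVert_F$ --- this is step (i), and it is why the rank of $P$ is left free. \textbf{Step 3 (fixed rank).} Diagonalise $Q=A\Lambda A^{\mathsf T}$ with $\Lambda=\operatorname{diag}(\lambda_1\ge\dots\ge\lambda_m)$, $A\in\operatorname{O}(m)$ (step (iii)). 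For $P$ of fixed rank $d$, using $\operatorname{tr}(P^{2})=\operatorname{tr}(P)=d$ and orthogonal invariance of $\lVert\cdot\rVert_F$, one has $\lVert Q-P\rVert_F^{2}=\sum_i\lambda_i^{2}-2\operatorname{tr}(\Lambda M)+d$ with $M=A^{\mathsf T}PA$ again a rank-$d$ projection; since the diagonal of $M$ lies in $[0,1]^m$ and sums to $d$, a rearrangement (LP--vertex) argument gives $\operatorname{tr}(\Lambda M)=\sum_i\lambda_iM_{ii}\le\sum_{i=1}^{d}\lambda_i$, attained at $M=\operatorname{diag}(1,\dots,1,0,\dots,0)$, i.e.\ $P=BB^{\mathsf T}$ with $B$ the top-$d$ eigenvectors of $Q$ (steps (iv)--(v)); the optimal fixed-$d$ value is $\sum_i\lambda_i^{2}-2\sum_{i\le d}\lambda_i+d$.

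\textbf{Step 4 (optimise the rank), and the crux.} The decrement of this value from $d-1$ to $d$ is $1-2\lambda_d$, so the global optimum over $d$ is attained at $d^{\star}=\#\{i:\lambda_i>1/2\}$ (any intermediate rank is equally good when some $\lambda_i=1/2$). It then remains to justify step (ii), namely that under the standing hypothesis $q\in\mathbb{S}^\ell$ one has $d^{\star}=\operatorname{tr}(Q)$ (rounded to the nearest integer). \emph{This reconciliation is the step I expect to be the main obstacle.} The sphere equation $\lVert Q-\tfrac12 I_m\rVert_F^{2}=\text{const}$ is indispensable here: it forces $\operatorname{tr}(Q^{2})=\operatorname{tr}(Q)$, i.e.\ $\sum_i\lambda_i(1-\lambda_i)=0$, which is essentially the only leverage available to tie the count $\#\{\lambda_i>1/2\}$ to $\operatorname{tr}(Q)=\sum_i\lambda_i$ --- and without some such constraint the assertion fails outright (for $Q=\tfrac13 I_m$ the trace is positive but the nearest projection is $0$). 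I would therefore concentrate the effort on this step, checking in particular whether $\sum_i\lambda_i(1-\lambda_i)=0$ by itself forces $|d^{\star}-\operatorname{tr}(Q)|<\tfrac12$, or whether one must additionally assume that $q$ is sufficiently close to the image of a genuine projection matrix for the trace to identify the correct dimension.
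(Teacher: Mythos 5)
Your Steps 1--3 are correct and are already more than the paper supplies: the paper's own proof consists of the bare assertion that the algorithm ``minimizes the Euclidean distance between $p$ and $q$, and therefore minimizes the distance on $\mathbb{S}^\ell$,'' with no treatment of how the rank is chosen. Your reduction from geodesic to chordal distance, the passage to the nearest-projection-matrix problem, and the Ky Fan/rearrangement argument showing that for fixed rank $d$ the optimum is $BB^{\mathsf T}$ with $B$ the top-$d$ eigenvectors, with optimal value $\sum_i\lambda_i^2-2\sum_{i\le d}\lambda_i+d$, are all sound. Your Step 4 then correctly solves the full problem: since the increment from rank $d-1$ to rank $d$ is $1-2\lambda_d$, the global minimizer is $d^\star=\#\{i:\lambda_i>1/2\}$. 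At that point you have in fact proved the correct theorem; the only thing left open is the reconciliation with step (ii) of the statement, which you rightly flag as the obstacle.

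That reconciliation cannot be carried out: the sphere constraint $\sum_i\lambda_i(1-\lambda_i)=0$ does not force $d^\star$ to agree with $\operatorname{tr}(Q)$ under any rounding convention, so the gap you identify is a defect of the statement, not of your argument. Concretely, with $m=2$ and $Q=\operatorname{diag}(1.2,\,0.4)$ one has $\sum_i\lambda_i^2=\sum_i\lambda_i=1.6$, so $q=\varphi(Q)$ lies on the sphere; the trace rounds to $2$, yet the objective values are $V(0)=1.6$, $V(1)=0.2$, $V(2)=0.4$, so the nearest projection has rank $1=d^\star$. With $m=3$ and eigenvalues $(1.1,\,1.1,\,0.673)$ the trace is $2.873$ (floor $2$) while $d^\star=3$, so taking the floor fails as well. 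Note also that your instinct about needing $q$ close to a genuine projection is exactly right, and the sphere constraint makes the failure generic rather than exotic: if all $\lambda_i\in[0,1]$ then $\sum_i\lambda_i(1-\lambda_i)=0$ forces every $\lambda_i\in\{0,1\}$, so \emph{every} non-projection point of the sphere has an eigenvalue outside $[0,1]$, which is precisely the regime in which $\operatorname{tr}(Q)$ and $\#\{\lambda_i>1/2\}$ can disagree. The correct version of step (ii) is ``set $d=\#\{i:\lambda_i>1/2\}$'' (equivalently, keep the eigenvectors whose eigenvalues exceed $1/2$); your write-up should state this as the conclusion rather than leaving the trace identity as an open step.
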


\begin{proof} In the case where the point $q\in \mathbb{S}^\ell$ is already on a cross-section of the sphere corresponding to $\operatorname{Gr}(d,\ell)$, the eigendecomposition will return exactly $d$ non-zero eigenvalues. The eigenvectors give a basis for the subspace that is embedded into the point $q$. Similarly when the point $q$ is between cross sections corresponding to Grassmannians, the above algorithm minimizes the Euclidean distance between the point $p$ and $q$, and therefore minimizes the distance on $\mathbb{S}^\ell$.
\end{proof}

The full model is specified as follows for each $x_i$, $i=1,\dots,n$,
\begin{align}
w &\sim \operatorname{Dir}_K(\alpha), \nonumber \\
\delta_i &\sim \operatorname{Mult}(w), \nonumber \\
P_k & \sim \mathcal{P}(\mathbb{S}^{(m-1)(m+2)/2}), \quad U_k U_k^\mathsf{T}=P_k, \quad d_k = \trace(P_k),  \label{uni} \\
\mu_k \mid d_k & \sim  \mathcal{N}_{d_k}(0,\lambda I), \nonumber \\
\theta_k \mid  U_k &\sim \mathcal{N}_m(0, \phi I), \quad U_k^\mathsf{T}\theta_k = 0, \label{orthreq}\\
\sigma_k^{-2}  &\sim  \operatorname{Ga}(a,b), \nonumber \\
\Sigma_{k(j)}^{-1}\mid d_k & \sim  \operatorname{Ga}(c,d), \quad j = 1,\dots,d_k, \nonumber \\
x_i \mid \delta_i &\sim \sum\nolimits_{k=1}^K \delta_{ik} \mathcal{N}_m\Bigl(U_k\mu_k + \theta_k, U_k(\Sigma_{k}-\sigma_k^2 I_{d_k})U_k^\mathsf{T} +\sigma_k^2I_m \Bigr), \nonumber
\end{align}
where equation \eqref{uni} corresponds to sampling from a distribution $\mathcal{P}$ supported on the lower half of the sphere $\mathbb{S}^{(m-1)(m+2)/2}$
a projection matrix $P_k$ that corresponds to a subspace and computing the dimension $d_k$ as the trace of the subspace and computing the subspace $U_k$ from the projection and equation \eqref{orthreq} corresponds to sampling from a normal distribution subject to the projection constraint $U_k^\mathsf{T} \theta_k = 0$.

\section{Posterior sampling}
\label{post}

In this section we provide an efficient algorithm for sampling the model parameters from the posterior distribution. Sampling directly from a joint posterior distribution of all the parameters is intractable and we will use Markov chain Monte Carlo methods for sampling. For most of the parameters we can sample from the posterior using a Gibbs sampler. This is not the case for sampling from the posterior distribution over projection matrices with prior  $\mathcal P$ on the sphere $\mathbb{S}^{(m-1)(m+2)/2}$. The prior $\mathcal P$ should place more mass on cross-sections of the sphere corresponding to lower dimensions $d_k$. Sampling efficiently from a joint distribution on $d_k, P_k$ is difficult. We will address this problem by using a Gibbs posterior  \cite{JiangTanner08} to sample the projection matrices. We first state the Gibbs posterior we use to sample $U_k$ and $\theta_k$ efficiently and the rationale for this form of the posterior. We then close with the sampling algorithm for all the model parameters.

It is not obvious how to place a prior on the sphere $\mathbb{S}^{(m-1)(m+2)/2}$ that will allow for efficiently sampling. We can however follow the idea of a Gibbs posterior to design an efficient sampler. The idea behind a Gibbs posterior is to replace the standard posterior which takes the form of
\[
\mathit{posterior} \propto  \mathit{prior} \times  \mathit{likelihood}
\]
with a distribution based on a loss or risk function that depends on both the data as well the parameter of interest in our case the loss function
is given by
\begin{align}
L(P_{[1:K]}, \theta_{[1:K]},X) &= \frac{1}{n}\sum\nolimits_{i=1}^n\Bigl[\min_{k=1,\dots,K} \Bigl(\lVert P_k(x_i - \theta_k) - (x_i -\theta_k) \rVert^2 +  \operatorname{tr}(P_k)\Bigr)\Bigr],  \label{loss}\\
 &= \frac{1}{n}\sum\nolimits_{i=1}^n\Bigl[\min_{k=1,\dots,K}  ( e_{ik} + d_k ) \Bigr], \nonumber
 \end{align}
where $e_{ik}$ is the residual error for the $i$th sample given by the $k$-th subspace with the error defined by our likelihood model. The above loss function corresponds to computing for each sample the residual error to the closest subspace weighted by the dimension of the subspace. The penalty weighting the dimension of the subspace enforces a prior that puts more mass on subspaces of lower dimension. Given the likelihood or loss function we state the following Gibbs posterior
\begin{equation}
\label{gibbs}
g(P_{[1:K]},\theta_{[1:K]}\mid X) \propto \exp\bigl(-n\psi L(P_{[1:K]}, \theta_{[1:K]},X)\bigr) \pi(P_{[1:K]}) \pi(\theta_{[1:K]}),
\end{equation}
where $\psi$ is a chosen temperature parameter. A Gibbs posterior is simply a loss oriented alternative to the likelihood based posterior distribution. Traditionally it is used to account for model misspecification. Here the Gibbs posterior is used to avoid overfitting by arbitrarily increasing the dimension of the subspace and for computational efficiency in sampling.

\subsection{Sampling $U_{[1:K]}$ and $\theta_{[1:K]}$ from the Gibbs posterior} In this subsection we outline our procedure for sampling from the model parameters $U_{[1:K]}$ and $\theta_{[1:K]}$ using a Metropolis--Hastings algorithm which is effectively a random walk on the sphere. We first state a few facts that we will use.
First recall that there is a deterministic relation between $U_k$ and $P_k$, so given a $P_k$ we can compute $U_k$. Also recall that a point sampled from $\mathbb{S}^m$ is not the pre-image of a subspace. Given a point $s^0_k \in \mathbb{S}^m$ we denote the subspace corresponding to this point as $P_k = \varphi^{-1}(s^{0}_k)$, this is the closest projection matrix to $s^0_k$ corresponding to a subspace. The procedure to compute $P_k$ from $s^{0}_k$ is given in Theorem \ref{project}. We obtain $U_k$ correspond to the top $d_k$ eigenvectors of $P_k$ where
$d_k$ is the trace of $P_k$.

We now state two procedures. The first procedure initializes the parameters $U_{[1:K]}$ and $\theta_{[1:K]}$.
The second procedure computes the $\ell$-th sample of the parameters.

The first procedure which we denote as $\mathbf{Initialize}(U_{[1:K]},\theta_{[1:K]})$ proceeds as follows:
\begin{enumerate}[\upshape 1.]
\item Draw $\sigma \sim \mathfrak{S}_K$, the symmetric group of permutations on $K$ elements.
\item For $i=1,\dots,K$,
\begin{enumerate}[\upshape (a)]
\item draw $z_{\sigma(i)}^0 \sim \mathcal{N}_{m(m+1)/2}(0,\tau I)$;
\item compute $s_{\sigma(i)}^0 = (\sqrt{{m(m+1)/8}})z_{\sigma(i)}^0/\lVert z_{\sigma(i)}^0 \rVert + \varphi(I_m)$;
\item compute $P_{\sigma(i)}^0 = \varphi^{-1}(s_{\sigma(i)}^0)$;
\item compute $d_{\sigma(i)}^0 = \operatorname{tr}(P_{\sigma(i)}^0)$;
\item compute $U_{\sigma(i)}^0$ as the top $d_{\sigma(i)}^0$ eigenvectors of $P_{\sigma(i)}^0$;
\item draw $\beta_{\sigma(i)}^0 \sim \mathcal{N}(0,I_m)$;
\item compute $\theta_{\sigma(i)}^0 = (I_m -P_{\sigma(i)}^0) \beta_{\sigma(i)}^0$.
\end{enumerate}
\end{enumerate}
The first step permutes the order we initialize the $K$ components. Step~(a) samples a point from a multivariate normal with the dimension of the sphere. In Step~(b) we normalize the sampled point, recenter it, and embed it onto the sphere $\mathbb{S}^ {m(m+1)/2}$. In Step~(c) we compute the projection matrix by computing the closest subspace to the embedded point computed in Step~(b). Given the projection matrix we compute the dimension in Step~(d) and the basis of the subspace in Step~(e). Steps~(e) and (f) we compute the $\theta$ parameters.

The second procedure which we denote as $\mathbf{Update}\bigl(U^{(\ell)}_{[1:K]},\theta^{(\ell)}_{[1:K]}\bigr)$ computes the $\ell$-th sample as follows:
 \begin{enumerate}[\upshape 1.]
\item Draw $\sigma \sim \mathfrak{S}_K$, the symmetric group of permutations on $K$ elements.
\item For $i=1,\dots,K$,
\begin{enumerate}[\upshape (a)]
\item draw $z_{\sigma(i)} \sim
  \mathcal{N}_{m(m+1)/2}(z_{\sigma(i)}^{(\ell-1)} , \tau I)$;
\item compute $s_{\sigma(i)} = (\sqrt{{m(m+1)/8}})z_{\sigma(i)}/\lVert z_{\sigma(i)} \rVert + \varphi(I_m)$;
\item compute $P_{\sigma(i)} = \varphi^{-1}(s_{\sigma(i)})$;
\item compute $d_{\sigma(i)} = \operatorname{tr}(P_{\sigma(i)})$;
\item compute $U_{\sigma(i)}$ as the top $d_{\sigma(i)}$ eigenvectors of $P_{\sigma(i)}$;
\item draw $u \sim \operatorname{Unif}[0,1]$;
\item set
\[
P_{[1:K]} = \bigl[P^{(\ell-1)}_{[1:K] - {\sigma(i)}} , P_{\sigma(i)}\bigr];
\]
\item set
\[
\theta_{[1:K]} = \bigl[\theta^{(\ell-1)}_{[1:K] - {\sigma(i)}} , (I_m-U_{\sigma(i)} U_{\sigma(i)}^{T})\theta_{\sigma(i)}^{(\ell-1)}\bigr];
\]
\item compute the acceptance probability
\[
\alpha = \frac{\exp\bigl(-n\psi L(P_{[1:K]},\theta_{[1:K]},X)\bigr)}{\exp\bigl(-n\psi L(P^{(\ell-1)}_{[1:K]},\theta^{(\ell-1)}_{[1:K]},X)\bigr)};
\]
\item set
\[
\bigl(U_{\sigma(i)}^{(\ell)},z_{\sigma(i)}^{(\ell)}\bigr) =
\begin{cases}
\bigl(U_{\sigma(i)}, z_{\sigma(i)}\bigr) &\text{if } \alpha >u,  \\
\bigl(U_{\sigma(i)}^{(\ell-1)},z_{\sigma(i)}^{(\ell-1)}\bigr) &\text{otherwise};
\end{cases}
\]
\item draw $\beta_{\sigma(i)} \sim \mathcal{N}_m(\beta_{\sigma(i)}^{(\ell-1)},I_m)$;
\item compute $\theta_{\sigma(i)} = (I_m -P_{\sigma(i)}^{\ell-1}) \beta_{\sigma(i)}$;
\item draw $u \sim \operatorname{Unif}[0,1]$;
\item set
\[
\theta_{[1:K]} = \bigl[\theta^{(\ell-1)}_{[1:K] - {\sigma(i)}},\theta_{\sigma(i)} \bigr];
\]
\item compute the acceptance probability
\[
\alpha = \frac{\exp\bigl(-n\psi L(P^{(\ell-1)}_{[1:K]},\theta_{[1:K]},X)\bigr)}{\exp\bigl(-n\psi L(P^{(\ell-1)}_{[1:K]},\theta^{(\ell-1)}_{[1:K]},X)\bigr)};
\]
\item set
\[
\bigl(\theta_{\sigma(i)}^{(\ell)},\beta_{\sigma(i)}^{(\ell)}\bigr) =
\begin{cases}
\bigl(\theta_{\sigma(i)}, \beta_{\sigma(i)}\bigr) &\text{if } \alpha >u , \\
\bigl(\theta_{\sigma(i)}^{(\ell-1)},\beta_{\sigma(i)}^{(\ell-1)}\bigr) & \text{otherwise}.
\end{cases}
\]
\end{enumerate}
\end{enumerate}
Many steps of this procedure are the same as the first procedure with the following exceptions. In Steps~(a) and (k) we are centering the random walk to the previous values of $z_{\sigma(i)}$ and $\beta_{\sigma(i)}$ respectively. Step~(g) updates the set of
$K$ projection matrices by replacing the $i$-th projection matrix in the set with the proposed new matrix. Step~(h) is analogous to Step~(g) but for the set of $\theta$ vectors. In Step~(j) we update the subspace and in Step~(p) we update the $\theta$ vector.

\subsection{Sampling algorithm} We now state the algorithm we use to sample from the posterior. To simplify notation we work with precision
matrices $J_k = \Sigma_k^{-1}$ instead of the inverse of covariance matrices for each mixture component. Similarly, we work with the precision of the $k$-th component $\gamma_k$ instead of the  inverse of the variance,  $\gamma_k = \sigma_k^{-2}$.

The follow procedure provides posterior samples:
  \begin{enumerate}[\upshape 1.]
  \item Draw $U_{[1:K]}^{(0)},\theta_{[1:K]}^{(0)}, d_{[1:K]}^{(0)} \sim \mathbf{Initialize}(U_{[1:K]},\theta_{[1:K]})$.
  \item Draw $J_{k(j_k)} \sim \operatorname{Ga}(a,b)$ for $k=1,\dots,K$ and $j_k = 1,\dots,d_k^{(0)}$.
  \item For $t=1,\dots,T$,
  \begin{enumerate}[\upshape (a)]
  \item for $i=1,\dots,n$ and $k=1,\dots,K$, compute
   \[
      e_{ik} =  \bigl\lVert P^{(t-1)}_k\bigl(x_i - \theta^{(t-1)}_k\bigr) - \bigl(x_i -\theta^{(t-1)}_k\bigr) \bigr\rVert^2;
   \]
  \item for  $i=1,\dots,n$, set
   \[
     w_i = \biggl(\frac{\exp(- \kappa r_{i1})}{\sum_{j'=1}^K\exp(- \kappa r_{ij'})},\dots,  \frac{\exp(- \kappa r_{iK})}{\sum_{j'=1}^K\exp(- \kappa r_{ij'})} \biggr);
   \]
  \item  for  $i=1,\dots,n$, draw $\delta_i \sim \operatorname{Mult}(w_i)$;
  \item  update for $k=1,\dots,K$ each $\mu_k^{(t)} \sim \mathcal{N}(m^*_k,S^*_k)$ where
  \[
      S^*_k=\bigl(n_k J^{(t-1)}_k+\lambda^{-1} I \bigr)^{-1}, \quad m^*_k = S^*_k\Bigl( U^{(t-1)\mathsf{T}}_k J_k^{(t-1)} \sum\nolimits_{\delta_i = k} x_i \Bigr),
   \]
    and $n_k = \#\{ i : \delta_i = k\}$;
  \item update for $k=1,\dots,K$, and each $\gamma_k^{(t)} \sim \operatorname{Ga}(a_k^*,b^*_k)$,
  \begin{align*}
  a^*_k &= n_k(m-d_k)+a, \\
   b^*_k &=   b+\frac{n_k}{2} (\theta^{(t-1)}_k)^\mathsf{T} \theta^{(t-1)}_k + \sum\nolimits_{\delta_i = k}\Bigl( \frac{1}{2} x_i^\mathsf{T} x_i  -
  x_i^\mathsf{T}   U_k^{(t-1)} U_k^{(t-1)\mathsf{T}}  x_i \Bigr) -  \theta_k^{(t-1)\mathsf{T}} \sum\nolimits_{\delta_i = k} x_i;
  \end{align*}
   \item update for $k=1,\dots,K$, and  $j_k = 1,\dots,d_k^{(t)}$,
   \[
      J_{k (j_k)}^{(t)} \sim \operatorname{Ga}\Bigl( \frac{n_k}{2} +a, b + \frac{1}{2} \sum\nolimits_{\delta_i = k} \bigl(U_k^{(t-1)\mathsf{T} }x_i - \mu_k\bigr)_{j_k}^2 \Bigr),
   \]
   where $(u)_j$ denotes the $j$th element of the vector $u$;
   \item draw
    \[
       U_{[1:K]}^{(t)},\theta_{[1:K]}^{(t)}, d_{[1:K]}^{(t)} \sim \mathbf{Update}\bigl(U^{(t-1)}_{[1:K]},\theta^{(t-1)}_{[1:K]}\bigr).
    \]
   \end{enumerate}
  \end{enumerate}
The update steps for $\mu, \sigma^2, \Sigma$ are (d), (e), (f) respectively and are given by the conditional probabilities given all other variables. Steps~(a), (b), and (c) assign the latent membership variable to each observation based on the distance to the $K$ subspaces.
We set the parameter $\kappa$ very large which effective assigns membership of each $x_i$ to the subspace with least residual error.


When drawing from the Gibb's posterior distribution via a
Metropolis--Hastings algorithm, the proposal distribution and temperature are adjusted
through a burn-in period. In the first stage of burn-in, the proposal variance parameter $\tau=1$ is fixed, while temperature is selected by a decreasing line search on a log-scale grid, from $10^{-20}$ to $10^{20}$ until the acceptance ratio reaches the 10\%--90\% range. With temperature fixed, the proposal variance $\tau$ is adjusted until the acceptance ratio falls in the 25\%--45\% range during the burn-in period. Thinning was applied in that every third draw of the sampler was kept, this was determined
from autocorrelation analysis.

\section{Specification of  a Topic Model}
\label{topicmodels}
\subsection{Generative Model on the Stiefel manifold}

The idea behind topic modeling is to specify a generative model for documents where the model parameters provide some intuition about a collection of documents. A common representation for documents is what is called a "bag of words" model
where the grammar and  structure of a document is ignored and a document is just a vector of counts of words \cite{blei03,deer90,hoffman99}. A natural generative model for collections of documents is an admixture of topics where each topic is a multinomial distribution over words, this model is called a latent Dirichlet allocation (LDA) model \cite{blei03,pritchard00}. We will propose a slight variation of the LDA model later in this section which is a direct extension/application of a mixture of subspaces.

We first state the standard LDA model. Given $D$ documents, $k$ topics, and a vocabulary of size $V$ the counts of
the $i$-th word in the $d$-th document is specified by the following hierarchical model
\begin{displaymath}
  \begin{array}{rcll}
  \theta_d \mid \alpha & \sim& \Dir_K(\alpha)  & \mbox{(topic probabilities for each document)},\\
\phi_k \mid \beta &\sim &\Dir_V(\beta) & \mbox{(word probabilities for each topic)}, \\
z_{i,d} \mid \theta_d &\sim & \Mult_K(\theta_d) & \mbox{(topic assignment for each word in each document)}, \\
w_{i,d} \mid \phi_{z_{i,d}} &\sim & \Mult(\phi_{z_{i,d}}) & \mbox{(word counts for each word in each document)}.
 \end{array}.
\end{displaymath}

In a spherical admixture model (SAM) \cite{reisinger10} the vector of word counts in each document transformed by centering at zero and normalizing to unit length.  The idea behind a SAM is to represent data as direction distributions on a hypersphere.  
The advantage of a SAM is that one can simultaneously model both frequency as well as  presence/absence of words, an LDA model can only model frequency. There is empirical evidence of greater accuracy in using a SAM for sparse data such as text \cite{bannerjee05,zhong05}. We extend the SAM model in two important ways, first by ensuring all the topics are orthogonal. The logic behind orthogonality constraints in the topics is to avoid the empirically observed problem of redundant topics.
Strategies to eliminate this problem include removing what are called stop words from the corpus,  for example words including ``such", ``as", and ``and." However, it is not always the case that stop words for a particular corpus are known a priori. For example the word ``topic" should be a stop word in a corpus of papers on topic modeling. Introducing an orthogonality constraint on the topics can enforce the prior knowledge that they should be interpretable as distinct. In \cite{hoff2009} a Bayesian model
for an orthogonal SAM is specified and a posterior sampling procedure is developed. A key insight this paper was how to efficiently simulate from the set of orthonormal matrices using the matrix Bingham--von~Mises--Fisher distribution. For an orthogonal SAM model the $K$ topics on $V$ words were modeled using the matrix Bingham--von~Mises--Fisher distribution which is on the Stiefel manifold, $\mathcal{V}(V,K)$.

Our second extension is to infer the number of topics $K$. While the orthogonality constraints help with interpretation of topics and removes redundant topics there are still topics with low posterior mixture probabilities and low coherence can still occur. This is mainly driven by  misspecification of the  number of topics. We now state our novel SAM model that enforces orthonormal columns as well as allows for the inference of the number of topics. The novel contributions of our model are inference of the number of topics by using the geometry of the Conway embedding to place a joint prior over the number of topics and topic.
We are able to sample a from Stiefel manifolds of variable intrinsic dimension $K$ by coupling draws from the von~Mises--Fisher distribution with inversion of the Conway embedding. This allows us to avoid using the matrix Bingham--von~Mises--Fisher distribution.

We provide some intuition for our SAM with orthogonality constraints as well as useful notation before we specify the model.
We will simulate a topic matrix $\phi$ where the columns of the matrix are topics $\{\phi_k\}_{k=1}^K$ and the number of topics $K$ is random, this orthogonal matrix is sampled from a distribution over Stiefel manifolds, $\mathcal{V}(V,K)$, with fixed ambient dimension $V$ (the number of words) and variable embedding dimension $K$. For each document
probability vector of topic proportions $\theta_d$ over the $K$ topics is generated. The $\ell_2$ normalized
unit vector $v_d$ representing normalized word frequencies for each document is generated from the topic
proportions $\theta_d$ and the topic matrix $\phi$. The following notation and concepts will be used in the generative model.
We denote $\mathbb{S}$ as the Conway sphere $\mathbb{S}^{(V-1)(V+2)/2}$ this is the collection of orthogonal subspaces
of variable dimension embedded into a sphere. We denote  $\varphi(\cdot)$ and  $\varphi^{-1}(\cdot)$ as the embedding 
function and its inverse respectively. We denote $\vMF_{\mathbb{S}}$ as the von Mises--Fisher distribution over
the Conway sphere  $\mathbb{S}^{(V-1)(V+2)/2}$ and $\vMF_{V}$ as the von Mises--Fisher distribution over the 
unit sphere $\mathbb{S}^{V}$. Given the topics matrix and the topic proportions $\theta_d$ for a document a
spherical average of the topics with respect to the proportions is the admixed parameter that models the combination of
topics in document and is computed by
$$\mbox{avg}(\phi, \theta_d) =  \frac{\phi\theta_d}{\Mag{\phi\theta_d}}.$$
We did not use the Buss-Fillmore spherical average due to computational considerations, we wanted to avoid iterative procedures. Given a vocabulary of size $V$ and $D$ documents, the $\ell_2$ normalized unit vector $v_d$ for
each document is specified by the following hierarchical model
\begin{equation}
\label{SAMmod}
  \begin{array}{rcll}
\mu \mid \kappa_0 & \sim& \vMF_{\mathbb{S}}(m, \kappa_0)  & \quad \mbox{(corpus average)},\\
\eta \mid \mu, \xi  &\sim &\vMF_{\mathbb{S}}(\mu, \xi) & \quad  \mbox{(embedded orthogonal topics)}, \\
(\phi, K) &=& \varphi^{-1}(\eta) & \quad  \mbox{(orthogonal topics and number of topics)}, \\
\theta_d \mid \alpha &\sim& \Dir_K(\alpha) & \quad \mbox{(topic proportions for each document)}, \\
\bar{\phi}_d  & =& \mbox{avg}(\phi, \theta_d) & \quad \mbox{(spherical average, admixed parameter)}, \\
v_d \mid \bar{\phi}_d, \kappa &\sim& \vMF_V\left(\bar{\phi}_d, \tau\right) &  \quad \mbox{(generates a document vector)}.
  \end{array}
\end{equation}
The main difference in the above model and prior models on spheres \cite{hoff2009,reisinger10} is that instead of sampling topics from the embedding on the Conway sphere a fixed $K$ topic vectors were each sampled from a von Mises--Fisher distribution over $\mathbb{S}^{V}$. In \cite{reisinger10} the vectors were not constrained to be orthogonal, in \cite{hoff2009} an efficient procedure is given to simulate these $K$ orthogonal vectors. The Conway sphere in this case is extremely high dimensional and there is computational utility in reducing the vocabulary size.  
 
As in the mixture of subspaces model we require a prior that will place greater weight on models with fewer topics, as increasing the number of topics will result in a better fit with respect to the likelihood. In the same spirit as Section \ref{post} we specify a Gibbs posterior to place a prior on the Conway sphere that can be efficiently be sampled from and favors models with fewer topics. We specify the following loss function for each document vector $v_d$
\begin{equation}
L\big(\phi,K \mid \{v_d\}_{d=1}^D, \tau\big) = \frac{2 D K}{V}-\sum_{d=1}^D \left(\tau\bar{\phi}_d^T v_d  \right)
\end{equation}
and corresponding Gibbs posterior
\begin{equation}
g(\phi, K \mid  \{v_d\}_{d=1}^D, \tau) \propto \exp \Big(-\psi D  L \times \big(\phi,K \mid \{v_d\}_{d=1}^D, \tau\big)  \Big)\pi(\phi)
\end{equation}
The maximum penalty above is $D$ and would counterbalance a perfect fit to each document with a loss value of zero.
Using the Gibbs posterior allows us to skip the step of estimating the corpus average parameter $\mu$ since the remaining parameters and $\mu$ are conditionally independent given the topics. We set the temperature parameter $\psi$ using out-of-sample fits on a random search over $\log(\psi) \in [-10,10]$. 

Inference of the remaining parameters of Model \eqref{SAMmod} are estimated using the same sampling steps as in a standard SAM, once the topics and number of topics are sampled.  The high-dimension of the Conway sphere can result in slower mixing of the topics and it is of interest to explore EM or Hamiltonian Monte Carlo approaches for computational gain.

\section{Posterior consistency}
\label{consist}

In this section we show that the mixture of subspaces model specified in Section \ref{mixsub} has good frequentist properties.
We provide an asymptotic analysis of our model and state some theoretical guarantees. We will prove posterior consistency---the posterior distribution specified by our model contracts to any arbitrary neighborhood of the true data generating density. 
Adapting proof techniques from the extensive literature on posterior consistency of Bayesian models \cite{Ghoshal10} to
our model is non-trivial.

Before we state our results we provide an explanation of the relation between our consistency result, the estimation procedure,
and ideally what proof statements would be of interest. The MCMC algorithm stated in Section \ref{post} uses a Gibbs posterior while our analysis in this section is for a standard posterior. Providing an argument for posterior consistency and proper calibration of the Gibbs posterior is of great interest but beyond the scope of this paper. A natural question is the asymptotic analysis of the convergence of mixture components and weights, basically an analysis of the clustering performance. The inference of mixture components is significantly more complicated that convergence in density and involves subtle identifiability issues. 

Let $\mathcal{M}$ be the space of all the densities in $\mathbb{R}^m$ and  $f_0$ be the true data generating density. We first define some notion of distances and neighborhoods in $\mathcal{M}$.
A weak neighborhood  of $f_0$ with radius $\epsilon$ is defined as
\begin{equation}
\label{eq-weaknb}
W_{\epsilon}(f_0)=\left\{f: \left\lvert \int gf\, dx- \int gf_0\, dx\right\rvert \leq \epsilon \; \text{for all } g\in C_b(\mathbb{R}^m) \right\},
\end{equation}
where $C_b(\mathbb{R}^m)$ is the space of all continuous and bounded functions on $\mathbb{R}^m$.
The  Hellinger  distance $d_H(f,f_0)$ is defined as
\[
d_H(f,f_0) = \left(\frac{1}{2}\int \bigl[\sqrt{f(x)}-\sqrt{f_0(x)}\bigr]^2\, dx\right)^{1/2}.
\]
Denote  $U_{\epsilon}(f_0)$   an $\epsilon$-Hellinger neighborhood around
$f_0$ with respect to $d_H$.  The Kullback--Leibler (KL)  divergence between $f_0$ and $f$  is defined to be
\begin{align}
\label{eq-KLdivergence}
d_{\operatorname{KL}}(f_0,f)=\int  f_0(x) \log \frac{ f_0(x)}{f(x)}\, dx,
\end{align}
with $K_{\epsilon}(f_0)$ denoting an $\epsilon$-KL neighborhood of $f_0$.

One of the key geometric insights of the Conway embedding given in Section 2 is that the geometric embedding allows for
the specification of prior distributions of the parameters  on  a smooth space. Let $\Pi_s$ be a prior on the sphere $\mathbb{S}^{(m-1)(m+2)/2}$ which can be taken to be the uniform distribution or the von~Mises--Fisher distribution.  By projecting the samples from $\Pi_s$ onto the cross-sections of the sphere, $\Pi_s$ induces a prior distribution on the subspaces basis $\mathbf{U}$ which we denote by $\Pi_{\mathbf{U}}$.


 Note that our model induces a prior $\Pi$ on $\mathcal{M}$. Assume the true density $f_0$ follows the following the regularity conditions
 \begin{enumerate}[\upshape (i)]
 \item $f_0(x) $ is bounded away from zero and bounded above by some constant $M$ for all $x\in \mathbb R^m$;
 \item $\lvert \int \log (f_0(x))f_0(x)\, dx \rvert <\infty$;
 \item for some $\delta>0$, $\int [\log ( f_0(x)/f_{\delta}(x))] f_0(x)\, dx<\infty$, where $f_{\delta}(x)=\inf_{y: \lvert y-x\rvert <\delta}f_0(y)$;
 \item there exists $\alpha>0$ such that $\int \lvert x\rvert^{2(1+\alpha)m}f_0(x)\, dx<\infty$.
 \end{enumerate}
 We will show that the posterior distribution $\Pi(\,\cdot\, \mid x_1,\dots, x_n)$ concentrates around any true density $f_0$ as $n\rightarrow \infty$. The following theorem is on weak consistency.
\begin{theorem}
\label{th-Wconsistency}
The posterior distribution $\Pi(\,\cdot\, \mid x_1,\dots, x_n)$ is weakly consistent. That is,
 for all $\epsilon>0$,
\begin{equation}
\label{eq-consistency}
\Pi(W_{\epsilon}(f_0) \mid x_1,\dots, x_n)\rightarrow 1\quad Pf_0^{\infty}\text{-almost surely as } n\rightarrow \infty,
\end{equation}
where $W_{\epsilon}(f_0)$ is a weak neighborhood of $f_0$ with radius $\epsilon$ and $Pf_0^{\infty}$ represents the true probability measure for $(x_1, x_2,\dots)$.
\end{theorem}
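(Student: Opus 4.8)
The plan is to deduce the statement from Schwartz's theorem (see \cite{Ghoshal10}): the posterior at $f_0$ is weakly consistent as soon as $f_0$ lies in the Kullback--Leibler support of the induced prior $\Pi$, that is, $\Pi(K_\epsilon(f_0))>0$ for every $\epsilon>0$. Since the weak neighborhoods $W_\epsilon(f_0)$ of \eqref{eq-weaknb} are precisely the neighborhoods generated by integration against $C_b(\mathbb{R}^m)$, Schwartz's theorem delivers exactly the $Pf_0^\infty$-a.s.\ convergence \eqref{eq-consistency}. So the entire content of the proof is the KL-support condition, and this is where the regularity conditions (i)--(iv) on $f_0$ enter.

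To verify the KL-support condition I would argue in three steps, in the style of Ghosal--Ghosh--Ramamoorthi. First, \emph{mollification}: let $f_\sigma := f_0 * \phi_{\sigma^2 I_m}$ be the convolution of $f_0$ with the density of $\mathcal{N}_m(0,\sigma^2 I_m)$. Using condition (iii) (the envelope $f_\delta$ is KL-integrable against $f_0$) together with (ii), a standard estimate --- for $\sigma<\delta$ one has $f_\sigma(x)\ge c_\sigma f_\delta(x)$ with $c_\sigma\uparrow 1$ --- yields $d_{\operatorname{KL}}(f_0,f_\sigma)\to 0$ as $\sigma\downarrow 0$; fix $\sigma$ so that this quantity is below $\epsilon/3$. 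Second, \emph{discretization}: condition (iv) forces $f_0$ to have light (polynomial) tails, so its mass outside a large ball $B_R$ is negligible and the continuous location mixture $f_\sigma$ is approximated, uniformly on $B_R$ and hence in $d_{\operatorname{KL}}$ (using the upper bound $M$ and the positivity in (i) to pass from $L^1$/Hellinger control to KL control), by a \emph{finite} mixture $\tilde f=\sum_k w_k\,\mathcal{N}_m(\theta_k,\sigma^2 I_m)$ obtained by replacing the mixing measure $f_0(y)\,dy$ with a finitely supported measure on a fine grid inside $B_R$. Third, \emph{realization inside the model}: each component of $\tilde f$ is an isotropic Gaussian, which is exactly the likelihood component of Section~\ref{mixsub} in the degenerate stratum $d_k=0$, i.e.\ $P_k=\mathbf 0$ (well inside the ``lower half'' of the Conway sphere), $\theta_k\in\mathbb{R}^m$ arbitrary, $\sigma_k^2=\sigma^2$. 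Hence $\tilde f$ belongs to the model.

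It then remains to see that $\Pi$ charges a set of parameters whose induced densities all stay within $\epsilon$ of $f_0$ in KL. Every elementary prior appearing in \eqref{uni}--\eqref{orthreq} has full support on its own parameter space --- the Dirichlet on the simplex, the Gaussians on $\mathbb{R}^{d_k}$ and on $\mathbb{R}^m$, the Gamma laws on $(0,\infty)$, and the base distribution $\mathcal{P}$ on the lower hemisphere, taken uniform or von~Mises--Fisher and so positive in a neighborhood of $\varphi(\mathbf 0)$ --- so every nonempty open set of parameters receives positive $\Pi$-mass. On a sufficiently small open set around the parameters producing $\tilde f$ the induced mixture density is uniformly close to $\tilde f$ in $L^1$ and stays uniformly bounded above and below on $B_R$; with the tail control from (iv) this upgrades to KL-closeness within $\epsilon/3$. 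Chaining the three $\epsilon/3$ estimates gives $\Pi(K_\epsilon(f_0))>0$, and Schwartz's theorem concludes.

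The step I expect to be the main obstacle is this last one: converting ``positive prior mass on good parameters'' into ``positive prior mass on a KL-neighborhood of $f_0$.'' Because $d_{\operatorname{KL}}$ is neither symmetric nor continuous, one cannot simply invoke $L^1$-density of Gaussian mixtures; one has to genuinely bound $\int f_0\log(f_0/f)$, which needs a uniform lower bound on the candidate densities over the bulk of $f_0$'s mass (conditions (i), (iv)) and explicit control of the tail contribution. Two secondary points need care. First, the subspace prior is routed through the Conway sphere, whose inverse embedding $\varphi^{-1}$ is discontinuous across cross-sections of differing dimension; this is harmless here only because the approximating parameters are chosen in the interior of the $d_k=0$ stratum, away from those discontinuities, and because $\mathcal{P}$ has positive density there. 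Second, the model fixes the number of components at $K$, so the discretization step must yield a mixture with at most $K$ components: the argument as written establishes consistency for all $f_0$ that are KL-approximable by mixtures of $K$ isotropic Gaussians satisfying (i)--(iv), and one either restricts to such $f_0$ or allows $K$ to be chosen as large as $\epsilon$ demands.
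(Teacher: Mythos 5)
Your proposal is correct and follows the same skeleton as the paper's proof: Schwartz's theorem reduces everything to the Kullback--Leibler support condition $\Pi(K_\epsilon(f_0))>0$, which is then verified by approximating $f_0$ with a finite location mixture of Gaussians that is realizable inside the model, and finally invoking full support of the elementary priors around the approximating parameters. Where you genuinely diverge is in how that mixture is realized and justified. The paper conditions on the event $d_1=\dots=d_K=m$, invokes Theorem~3.1 of Page et al.\ and Theorem~2 of Wu--Ghosal to produce a continuous location mixture $g(x,\Gamma)$ within $\epsilon$ of $f_0$ in KL, and then discretizes it over a grid of $L^m$ cubes with $K=L^m$; you instead work in the degenerate stratum $d_k=0$, where each component collapses to an isotropic Gaussian $\mathcal{N}_m(\theta_k,\sigma_k^2 I_m)$, and carry out the mollification--discretization directly. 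Your route is more self-contained (no black-box citation for the continuous mixture) and, as a bonus, sidesteps a glitch in the paper's opening step: the prior $\mathcal{P}$ is supported on the lower half of the Conway sphere, i.e.\ on strata with $d< m(m+1)/4$, so for small ambient dimension ($m\le 3$) the event $d_i=m$ actually receives zero prior mass, whereas the $d_k=0$ cap around the pole $\varphi(\mathbf{0})$ always carries positive mass. Both arguments share the same two soft spots, which you correctly flag: upgrading $L^1$/uniform closeness of the finite mixture to KL closeness (the paper dispatches this with ``by the continuity of the $\log$ function,'' which is no more rigorous than your sketch and genuinely requires the lower bound and tail conditions (i) and (iv)), and the tension between the model's fixed number of components $K$ and the discretization's demand for many components --- the paper likewise simply asserts ``there exists $K$ large enough.''
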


\begin{proof}
A result due to \cite{schwarz78} states that if $\Pi$ assumes positive mass to any Kullback--Leibler neighborhood of $f_0$, then the resulting posterior is weakly consistent. Therefore, one needs to show for all $\epsilon>0$,
\begin{equation}
\Pi(\{f : d_{\operatorname{KL}}(f_0, f)\leq \epsilon\})>0.
\end{equation}


Note that $\Pi(d_i=m)>0$ for any $i=1,\dots, K$. Then one has
\begin{multline}
\label{eq-kl2}
\Pi(K_{\epsilon}(f_0)) \ge \\
\int _{\operatorname{O}(m)\times\dots\times \operatorname{O}(m)}\Pi(K_{\epsilon}(f_0) \mid U_1,\dots, U_K,\; d_1=\dots= d_k=m)\, d\Pi(U_1,\dots, U_K  \mid d_1=\dots= d_k=m).
\end{multline}
Therefore it suffices to show that $\Pi(K_{\epsilon}(f_0) \mid U_1,\dots, U_K )>0$ where $U_1,\dots, U_K$ are the bases of the respective $m$-dimensional subspaces $\mathbf{U}_1,\dots, \mathbf{U}_K$.


We will show that there exists $K$ large enough such that given $m$-dimensional subspaces $\mathbf{U}_1,\dots, \mathbf{U}_K$, the following mixture model assigns positive mass to any KL neighborhood of $f_0$,
\begin{equation}
\label{eq-finitemodel}
f(x,\mathbf{U},\Sigma)=\sum\nolimits_{j=1}^K w_j \mathcal{N}(\phi(\mu_j),\widetilde\Sigma_j),
\end{equation}
with $\phi(\mu_j)=U_j\mu_j+\theta_j$ and $\widetilde\Sigma_j=U_j(\Sigma_{j}-\sigma^2I_{d_j})U_j^\mathsf{T} +\sigma_j^2I_m.$

If $\mathbf{U}_1,\dots, \mathbf{U}_K$ have the same dimension $m$ and $U_1,\dots, U_K$ are a choice of orthonormal bases on the respective subspaces, then an infinite-dimensional version of our model can be given by
\begin{align*}
X\sim g(x,\Gamma)=\int_{\mathbb{R}^m}\mathcal{N}(x;\phi(\mu),\Sigma)P(d\mu),
\phi(\mu)=U\mu+\theta,\quad \Sigma=U(\Sigma_0-\sigma^2I_m)U^\mathsf{T}+\sigma^2I_m
\end{align*}
with parameters $\Gamma=(U,\theta,\Sigma_0, \sigma, P)$. The prior for $P$ can be given by a Dirichlet process prior whose base measure has full support in $\mathbb{R}^m$ while the priors of the rest  parameters $(U,\theta,\Sigma_0, \sigma)$ can be given the same as those of our model. By Theorem 3.1 of \cite{page13} or Theorem 2 of \cite{wu2010}, there exists an open subset $\mathcal{P}$ of the space of all the probability measures on $\mathbb R^m$ such that for  for all $\epsilon>0$, any $P\in\mathcal{P}$ such that
\begin{align}
\label{eq-ineq0}
\int_{\mathbb{R}^m} f_0(x)\log \frac{f_0(x)}{g(x,\Gamma)}\, dx\leq \epsilon.
\end{align}
We will first show that for any $\epsilon'>0$  there exists $K$ large enough and $w_1,\dots,w_K$, $\theta_1,\dots, \theta_K$, $\mu_1,\dots, \mu_K$, $U_1,\dots, U_K$, $\Sigma_1,\dots, \Sigma_K$, $\sigma_1,\dots, \sigma_K$, and $\sigma^2$ such that
\begin{align*}
\left\lvert g(x,\Gamma)-\sum\nolimits_{j=1}^K w_j \mathcal{N}(\phi(\mu_j),\widetilde\Sigma_j)\right\rvert \leq \epsilon',
\end{align*}
for any $P\in\mathcal P$.

Let $L$ be some large enough number. We first partition $\mathbb R^m$ into  $L^m+1$ sets.  Let
\[
A_{i_1,\dots, i_m} =\prod_{j=1}^m\left(-L+(i_j-1)\frac{\log L}{L}, -L+i_j\frac{\log L}{L}\right]
\]
and
\[
A_0^L\cup \left( \bigcup\nolimits_{i_1,\dots, i_m =1}^L A_{i_1,\dots, i_m} \right)=\mathbb R^m.
\]
Pick $\mu_j\in A_{i_1,\dots, i_m} $ where $j=1,\dots, L^m$ and let $w_j=P(A_{i_1,\dots, i_m}) $. Approximating the integral by the finite sum over the cubes in $\mathbb R^m$, for all $\epsilon''>0$, there exists $L$ large enough such that
\begin{align*}
\left \lvert g(x,\Gamma)-\sum\nolimits_{j=1}^{L^m} w_j \mathcal{N}(\mu_jU+\theta_j, \Sigma)\right\rvert \leq \epsilon''.
\end{align*}
Let $K=L^m$.  Now let $U_1,\dots, U_{K}$ be $K$ points in the $\epsilon''$ neighborhood of $U$, $\theta_1,\dots, \theta_K$ in the $\epsilon''$ neighborhood of $\theta$, $\Sigma_1,\dots, \Sigma_K$ be in the $\epsilon''$ neighborhood of $\Sigma_0$, and $\sigma_1,\dots, \sigma_K$ in the $\epsilon''$ neighborhood of $\sigma$, then by the continuity of $\sum_{j=1}^K w_j \mathcal{N}(\mu_jU+\theta, \Sigma)$, one can show that
\begin{align*}
\left\lvert g(x,\Gamma)-\sum\nolimits_{j=1}^{K} w_j \mathcal{N}(\mu_jU_j+\theta_j, \Sigma_j)\right\rvert &\leq \left\lvert g(x,\Gamma)-\sum\nolimits_{j=1}^K w_j \mathcal{N}(\mu_jU+\theta, \Sigma)\right\rvert \\
&\qquad +\left\lvert \sum\nolimits_{j=1}^K w_j \mathcal{N}(\mu_jU+\theta, \Sigma)-\sum\nolimits_{j=1}^K w_j \mathcal{N}(\mu_jU_j+\theta_j,\widetilde \Sigma_j)\right\rvert \\
&\leq \epsilon''+\delta(\epsilon''),
\end{align*}
where $\delta(\epsilon'')\rightarrow 0$  when $\epsilon''\rightarrow 0$.
For the above choices of $w_1,\dots,w_K$, $\mu_1,\dots, \mu_K$, $\theta_1,\dots, \theta_K$, $U_1,\dots, U_K$, $\Sigma_1,\dots, \Sigma_K$, $\sigma_1,\dots, \sigma_K$ corresponding to any $P\in \mathcal{P}$,  one looks at
\begin{multline*}
\int_{\mathbb{R}^m} f_0(x)\log \frac{f_0(x)}{\sum_{j=1}^K w_j \mathcal{N}(\mu_jU_j+\theta_j, \widetilde\Sigma_j)}\, dx
= \int_{\mathbb{R}^m} f_0(x)\log \frac{f_0(x)}{g(x,\Gamma)}\, dx \\
+\int_{\mathbb{R}^m} f_0(x)\log \frac{g(x,\Gamma)}{\sum_{j=1}^K w_j \mathcal{N}(\mu_jU_j+\theta_j,\widetilde \Sigma_j)}\, dx.
\end{multline*}
Take $\epsilon''=\epsilon$. By the continuity of the $\log$ function, one has
\begin{align}
\label{eq-ineq}
\int_{\mathbb{R}^m} f_0(x)\log \frac{f_0(x)}{\sum_{j=1}^K w_j \mathcal{N}(\mu_jU_j+\theta_j, \widetilde\Sigma_j)}\, dx\leq \epsilon+\delta'(\epsilon)
\end{align}
where $\delta'(\epsilon)\rightarrow 0$  when $\epsilon\rightarrow 0$.
Note that our prior assigns positive mass to arbitrary neighborhood of $w_1,\dots,w_K$, $\theta_1,\dots, \theta_K$, $\mu_1,\dots, \mu_K$,
$U_1,\dots, U_K$, $\Sigma_1,\dots, \Sigma_K$, $\sigma_1,\dots, \sigma_K$, $\sigma^2$. By \eqref{eq-ineq0}, \eqref{eq-ineq} and the continuity of the model, our assertion follows.
\end{proof}

In proving the following strong consistency theorem, we assume that  the parameters $\sigma_i^2, \sigma^2$  and the diagonal elements of $\Sigma_i$ ($i=1,\dots, K$) follow i.i.d.\ truncated Gamma priors supported on some bounded interval $[0, M]$ for some large enough constant $M$.
\begin{theorem}
The posterior distribution $\Pi(\,\cdot\, \mid x_1,\dots, x_n)$ is strongly consistent. That is,
 for all $\epsilon>0$,
\begin{equation}
\label{eq-consistency-2}
\Pi\left(U_{\epsilon}(f_0) \mid x_1,\dots, x_n\right)\rightarrow 1\quad Pf_0^{\infty}-\text{almost surely as } n\rightarrow \infty,
\end{equation}
where $U_{\epsilon}(f_0)$ is a  neighborhood of $f_0$ with radius $\epsilon$ with respect to the Hellinger distance.
\end{theorem}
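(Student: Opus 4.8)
The plan is to upgrade Theorem~\ref{th-Wconsistency} to strong consistency via the standard sieve-and-entropy argument for Hellinger consistency (see \cite{Ghoshal10}), which requires two ingredients: (a) the Kullback--Leibler support condition $\Pi(K_\epsilon(f_0))>0$ for every $\epsilon>0$, and (b) a sequence of sieves $\mathcal{F}_n\subseteq\mathcal{M}$ for which $\Pi(\mathcal{F}_n^c)\le C\,e^{-cn}$ for some $c>0$ and whose Hellinger metric entropy satisfies $\log N(\epsilon,\mathcal{F}_n,d_H)=o(n)$. Given (a) and (b) the posterior puts mass tending to $1$ on every Hellinger ball $U_\epsilon(f_0)$, $Pf_0^{\infty}$-almost surely. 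Ingredient (a) is exactly what the proof of Theorem~\ref{th-Wconsistency} delivers---that argument establishes $\Pi(K_\epsilon(f_0))>0$ directly, using the regularity conditions (i)--(iv) on $f_0$---so the new work is the construction of the sieves and the verification of (b).

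To build the sieve, first note which ingredients of the model are already ``compact'': the number of components $K$ is fixed; the dimensions $d_k$ take values in the finite set $\{0,1,\dots,m\}$; the weights $w$ lie on the simplex; and each basis $U_k$ lies on a compact Stiefel manifold $\operatorname{V}(d_k,m)$. None of these contributes more than $O(1)$ to a log-covering number. The parameters that are a priori unbounded are the locations $\mu_k$ and $\theta_k$, carrying Gaussian priors, and the scales: under the present hypothesis the diagonal entries of $\Sigma_k$ and the $\sigma_k^2$ have bounded support, so every eigenvalue of each component covariance $\widetilde\Sigma_k=U_k\Sigma_kU_k^{\mathsf T}+\sigma_k^2 V_kV_k^{\mathsf T}$ lies in $(0,M]$. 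I would therefore take
\[
\mathcal{F}_n=\Bigl\{\, \sum\nolimits_{k=1}^K w_k\,\mathcal{N}_m\bigl(U_k\mu_k+\theta_k,\widetilde\Sigma_k\bigr) \;:\; \lVert\mu_k\rVert\le M_n,\ \lVert\theta_k\rVert\le M_n,\ \lambda_{\min}(\widetilde\Sigma_k)\ge \underline{s}_n \ \text{for all } k \,\Bigr\},
\]
with $M_n$ polynomial in $n$ and $\underline{s}_n$ decaying polynomially, e.g.\ $M_n=n$ and $\underline{s}_n=1/n$.

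The prior-mass estimate is then routine: Gaussian tails give $\Pi(\lVert\mu_k\rVert>M_n)\le e^{-c_1M_n^2}$ and $\Pi(\lVert\theta_k\rVert>M_n)\le e^{-c_1M_n^2}$, while because the component precisions $\sigma_k^{-2}$ and $\Sigma_{k(j)}^{-1}$ are Gamma-distributed (with bounded support under the hypothesis) their small-variance, i.e.\ large-precision, tails are exponential, $\Pi(\lambda_{\min}(\widetilde\Sigma_k)<\underline{s}_n)\le e^{-c_2/\underline{s}_n}$; with the displayed choices each of the finitely many terms is at most $e^{-cn}$, hence $\Pi(\mathcal{F}_n^c)\le C\,e^{-cn}$. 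For the entropy, on $\mathcal{F}_n$ one has uniform two-sided control $\underline{s}_n\le\lambda_{\min}(\widetilde\Sigma_k)\le\lambda_{\max}(\widetilde\Sigma_k)\le M$, so the explicit Hellinger-affinity formula for Gaussians is Lipschitz in the mean with constant $O(\underline{s}_n^{-1/2})$ and in the covariance entries with constant $O(\underline{s}_n^{-1})$; combining this with the standard convexity bound that reduces the Hellinger distance between two finite mixtures to the $\ell_1$-distance between the weight vectors plus the Hellinger distances between corresponding components, one builds an $\epsilon$-net of $\mathcal{F}_n$ by netting the simplex, the compact Stiefel manifolds, the balls $\{\lVert\mu_k\rVert,\lVert\theta_k\rVert\le M_n\}$ at resolution of order $\epsilon\,\underline{s}_n$, and the scalar scale ranges $[\underline{s}_n,M]$. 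This yields $\log N(\epsilon,\mathcal{F}_n,d_H)=O(\log n)=o(n)$, and feeding (a), (b) into the sieve theorem completes the proof.

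The step I expect to be the genuine obstacle is calibrating $\underline{s}_n$: the prior mass $\Pi(\lambda_{\min}(\widetilde\Sigma_k)<\underline{s}_n)$ must be exponentially small in $n$, which forces $\underline{s}_n$ down, while $\log(1/\underline{s}_n)$ must remain $o(n)$, which forces $\underline{s}_n$ up---and these are compatible only because of the bounded-support (truncated Gamma) assumption on the scales: the upper truncation removes any need to let the largest scale grow with $n$, so the only unbounded directions left are the locations (light Gaussian tails) and the small-scale direction, for which the Gamma form of the precision priors supplies the exponential lower tail that makes $\underline{s}_n=1/n$ work on both fronts. A secondary point requiring care is that the claimed uniform Hellinger-Lipschitz estimates genuinely hold on $\mathcal{F}_n$; here one uses that the columns of $U_k$ are orthonormal and $\theta_k\perp\operatorname{span}(U_k)$, so the component mean $U_k\mu_k+\theta_k$ is controlled by $\lVert\mu_k\rVert$ and $\lVert\theta_k\rVert$ alone, and that $\widetilde\Sigma_k$ has the same eigenvalues as the blocks $\Sigma_k$ and $\sigma_k^2I_{m-d_k}$, so bounding the diagonal entries of $\Sigma_k$ and $\sigma_k^2$ controls $\lambda_{\min}$ and $\lambda_{\max}$ of $\widetilde\Sigma_k$.
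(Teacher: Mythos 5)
Your proposal follows essentially the same route as the paper: Schwartz/KL support supplied by the weak-consistency theorem combined with the Barron--Ghosal sieve criterion (metric entropy $o(n)$ plus exponentially small prior mass off the sieve), with the sieve obtained by truncating $\lVert\mu_k\rVert,\lVert\theta_k\rVert$ at a polynomial rate and lower-bounding the covariance eigenvalues at a polynomial rate, the exponential prior-mass bound coming from the Gaussian tails and the (truncated) Gamma precision tails exactly as you describe. The only cosmetic differences are that the paper lets the number of components in the sieve grow as $K_n=c_1\sqrt{n}$ (yielding an $O(\sqrt{n})$ entropy term rather than your $O(\log n)$) and controls the $L^1$ distance between mixtures rather than working directly with the Hellinger affinity; neither changes the substance of the argument.
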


\begin{proof}
By Theorem~\ref{th-Wconsistency}, the true density $f_0$ is in the weak support of our model. Then by a result due to \cite{barron88} (also see Theorem 2 in \cite{Ghoshal99}), for all $\epsilon>0$ if we can construct sieves  $D_{\delta, n}\subseteq \mathcal{M}$ with $\delta<\epsilon$  such that the  metric entropy $\log N(\delta, D_{\delta, n})\leq n\beta$ for some $\beta<\epsilon^2/2$ and $\Pi(D_{\epsilon, n}^c )\leq C_1\exp(-n\beta_1)$ with some constants $C_1$ and $\beta_1$. Then the posterior distribution  $\Pi(\,\cdot\, \mid x_1,\dots, x_n)$ is strongly consistent at $f_0.$

Denote
\[
\Theta=(K, \theta_1,\dots, \theta_K, w_1,\dots,w_K,\mu_1,\dots, \mu_K,\allowbreak U_1,\dots, U_K, \Sigma_1,\dots, \Sigma_K, \allowbreak\sigma_1,\dots, \sigma_K, \boldsymbol{\sigma} )
\]
and 
\[
\boldsymbol{\sigma}=(\sigma_{11},\dots, \sigma_{1d_1},\dots, \sigma_{K1},\dots, \sigma_{Kd_K},\sigma, \sigma_1,\dots, \sigma_K )
\]
where $\sigma_{ij}^2$ is the $j$th diagonal element of $\Sigma_i$ which we assume to be diagonal in our model.
Let
\[
\boldsymbol{\Theta}_n=\{\Theta: K= K_n,\; \lVert \theta_i\rVert \leq \bar{\theta}_n, \; \lVert \mu_i\rVert \leq \bar{\mu}_n, \; \max(\boldsymbol{\sigma})\leq M,\; \min(\boldsymbol{\sigma})\geq h_n,\; i=1,\dots, K\}
\]
where $K_n$, $\bar{\theta}_n $, $\bar{\mu}_n$, $M$ and $h_n$ are some sequences depending on $n$.

Let $D_{\delta, n}= \{f(x,\Theta) : \Theta\in \boldsymbol{\Theta}_n\}$  where $f(x,\Theta)$ is given by \eqref{eq-finitemodel} for any $\Theta$. We need to verify the metric entropy and the prior mass of $D_{\delta, n}$. Let
\[
\Theta^i_{K_n}=( d_1^i,\dots, d_K^i, \theta_1^i,\dots, \theta_K^i, w_1^i,\dots,w_K^i, \mu_1^i,\dots, \mu_K^i, U_1^i,\dots, U_K^i, \boldsymbol{\sigma}^{i}), \quad i=1,2.
\]
Note that posterior consistency with respect to the Hellinger distance is equivalent to posterior consistency with respect to the $L^1$-distance due to the equivalence of the two distances.
For $i = 1, 2$, let $\phi^i(\mu_j)=U_j^i\mu_j^i+\theta_j^i$ and $\widetilde\Sigma_j^i=U_j^i(\Sigma_{j}^i-(\sigma^i)^2I_{d_j^i})(U_j^{i})^\mathsf{T} +(\sigma_j^i)^2I_m.$
One has
\begin{align*}
\int_{\mathbb{R}^m} &\lvert f(x, \Theta^1_{K_n})-f(x, \Theta^2_{K_n})\rvert \, dx\\
&=\int_{\mathbb{R}^m}\left\lvert \sum\nolimits_{j=1}^{K_n}w_j^{1}\mathcal{N}_m(\phi^1(\mu_j),\widetilde\Sigma_j^1)- \sum\nolimits_{j=1}^{K_n}w_j^{2}\mathcal{N}_m(\phi^2(\mu_j),\widetilde\Sigma_j^2)    \right\rvert \, dx\\
&=\int_{\mathbb{R}^m}\Bigl\lvert \sum\nolimits_{j=1}^{K_n}w_j^{1}\mathcal{N}_m(\phi^1(\mu_j),\widetilde\Sigma_j^1)- \sum\nolimits_{j=1}^{K_n}w_j^{2}\mathcal{N}_m(\phi^2(\mu_j),\widetilde\Sigma_j^2)\\
&\qquad \qquad + \sum\nolimits_{j=1}^{K_n}w_j^{1}\mathcal{N}_m(\phi^2(\mu_j),\widetilde\Sigma_j^2)- \sum\nolimits_{j=1}^{K_n}w_j^{1}\mathcal{N}_m(\phi^2(\mu_j),\widetilde\Sigma_j^2)   \Bigr\rvert \, dx\\
&\leq \int_{\mathbb{R}^m}\left\lvert \sum\nolimits_{j=1}^{K_n}w_j^{1}\left( \mathcal{N}_m(\phi^1(\mu_j),\widetilde\Sigma_j^1)- \mathcal{N}_m(\phi^2(\mu_j),\widetilde\Sigma_j^2) \right)\right\rvert \, dx\\
&\qquad \qquad +\int_{\mathbb{R}^m}\left\lvert \sum\nolimits_{j=1}^{K_n}(w_j^{1}-w_j^{2}) \mathcal{N}_m(\phi^2(\mu_j),\widetilde\Sigma_j^2)\right\rvert \, dx\\
&\leq \int_{\mathbb{R}^m}\left\lvert \sum\nolimits_{j=1}^{K_n}w_j^{1}\left( \mathcal{N}_m(\phi^1(\mu_j),\widetilde\Sigma_j^1)- \mathcal{N}_m(\phi^2(\mu_j),\widetilde\Sigma_j^2) \right)\right\rvert \, dx+\sum\nolimits_{j=1}^{K_n}\lvert w_j^{1}-w_j^{2}\rvert \\
&\leq \sum\nolimits_{j=1}^{K_n}w_j^{1}\int_{\mathbb{R}^m}\left\lvert \left( \mathcal{N}_m(\phi^1(\mu_j),\widetilde\Sigma_j^1)- \mathcal{N}_m(\phi^2(\mu_j),\widetilde\Sigma_j^2) \right)\right\rvert \, dx+\sum\nolimits_{j=1}^{K_n}\lvert w_j^{1}-w_j^{2}\rvert .
\end{align*}
Note that
\begin{align*}
\int_{\mathbb{R}^m}&\left\lvert \mathcal{N}_m(\phi^1(\mu_j),\widetilde\Sigma_j^1)- \mathcal{N}_m(\phi^2(\mu_j),\widetilde\Sigma_j^2)\right\rvert \, dx\\
&=\int_{\mathbb{R}^m}\left\lvert \mathcal{N}_m(\phi^1(\mu_j),\widetilde\Sigma_j^1)- \mathcal{N}_m(\phi^2(\mu_j),\widetilde\Sigma_j^1)+\mathcal{N}_m(\phi^2(\mu_j),\widetilde\Sigma_j^1)- \mathcal{N}_m(\phi^2(\mu_j),\widetilde\Sigma_j^2) \right\rvert \, dx\\
&\leq \int_{\mathbb{R}^m}\left\lvert \mathcal{N}_m(\phi^1(\mu_j),\widetilde\Sigma_j^1)- \mathcal{N}_m(\phi^2(\mu_j),\widetilde\Sigma_j^1)\right\rvert \, dx+\int_{\mathbb{R}^m}\left\lvert \mathcal{N}_m(\phi^2(\mu_j),\widetilde\Sigma_j^1)- \mathcal{N}_m(\phi^2(\mu_j),\widetilde\Sigma_j^2)\right\rvert \, dx.\\
\end{align*}
By the proof of Lemma~5 of \cite{wu2010}, one has for the first term of the above expression
\begin{align*}
 \int_{\mathbb{R}^m}\left\lvert \mathcal{N}_m(\phi^1(\mu_j),\widetilde\Sigma_j^1)- \mathcal{N}_m(\phi^2(\mu_j),\widetilde\Sigma_j^1)\right\rvert \, dx
&\leq\sqrt{\frac{2}{\pi}}\frac{\lVert \phi^1(\mu_j)-\phi^2(\mu_j)\rVert}{\lambda_1(\widetilde\Sigma_j^1 )^{1/2}}\\
&=\sqrt{\frac{2}{\pi}}\frac{\lVert(U_j^1\mu_j^1+\theta_j^1)-(U_j^2\mu_j^2+\theta_j^2)  \rVert}{\lambda_1(\widetilde\Sigma_j^1 )^{1/2}}\\
&\leq \sqrt{\frac{2}{\pi\lambda_1(\widetilde\Sigma_j^1)}}\left( \lVert\theta_j^1-\theta_j^2  \rVert+\lVert U_j^1\mu_j^1-U_j^2\mu_j^2  \rVert\right),
\end{align*}
where $\lambda_1(\widetilde\Sigma_j^1)$ is the smallest eigenvalue of $\widetilde\Sigma_j^1$.
Therefore combining all the terms above
\begin{align*}
\int_{\mathbb{R}^m} &\lvert f(x, \Theta^1_{K_n})-f(x, \Theta^2_{K_n})\rvert \, dx\\
&\leq \sum\nolimits_{j=1}^{K_n}\lvert w_j^{1}-w_j^{2}\rvert+\max_{j=1,\dots, K_n} \left\{ \sqrt{\frac{2}{\pi\lambda_1(\widetilde\Sigma_j^1)}}\left( \lVert\theta_j^1-\theta_j^2  \rVert+\lVert U_j^1\mu_j^1-U_j^2\mu_j^2  \rVert\right) \right\}\\
&\qquad \qquad +\sum_{j=1}^{K_n}w_j\int_{\mathbb{R}^m}\left\lvert \mathcal{N}_m(\phi^2(\mu_j),\widetilde\Sigma_j^1)- \mathcal{N}_m(\phi^2(\mu_j),\widetilde\Sigma_j^2)\right\rvert \, dx.
\end{align*}

Without loss of generality, assume $\det(\widetilde\Sigma_j^2) \geq \det(\widetilde\Sigma_j^1)$. One has
\begin{align}
\label{eq-Sigma}
\int_{\mathbb{R}^m}&\left\lvert \mathcal{N}_m(\phi^2(\mu_j),\widetilde\Sigma_j^1)- \mathcal{N}_m(\phi^2(\mu_j),\widetilde\Sigma_j^2)\right\rvert \, dx \nonumber\\ 
&=\frac{1}{(2\pi)^{m/2}}\int_{\mathbb{R}^m}\biggl\lvert\frac{1}{\det(\widetilde\Sigma_j^1)^{1/2}}\exp\bigl[-\tfrac{1}{2} (x- \phi^2(\mu_j))^\mathsf{T}(\widetilde\Sigma_j^1)^{-1}(x- \phi^2(\mu_j)) \bigr] \nonumber\\ 
&\qquad \qquad-\frac{1}{\det(\widetilde\Sigma_j^2)^{1/2}}\exp\bigl[-\tfrac{1}{2} (x- \phi^2(\mu_j))^\mathsf{T}(\widetilde\Sigma_j^2)^{-1}(x- \phi^2(\mu_j)) \bigr]  \biggr\rvert \, dx  \nonumber\\
&\leq \frac{2^{m+1}}{(2\pi)^{m/2}}\int_{[0,\infty)^m}\max\biggl\{0, \; \frac{1}{\det(\widetilde\Sigma_j^1)^{1/2}}\exp\bigl[-\tfrac{1}{2} (x- \phi^2(\mu_j))^\mathsf{T}(\widetilde\Sigma_j^1)^{-1}(x- \phi^2(\mu_j)) \bigr]\\ 
&\qquad \qquad-\frac{1}{\det(\widetilde\Sigma_j^2)^{1/2}}\exp\bigl[-\tfrac{1}{2} (x- \phi^2(\mu_j))^\mathsf{T}(\widetilde\Sigma_j^2)^{-1}(x- \phi^2(\mu_j)) \bigr]  \biggr\}\,dx.  \nonumber
\end{align}

We take $\delta=\epsilon/2$.  We first partition $[0,1]^{K_n}$ into  $N_w$ grid points. With a choice of the grid points given in  Lemma~1 in \cite{Ghoshal99},   the number grid points needed for $\sum\nolimits_{j=1}^{K_n}\lvert w_j^{1}-w_j^{2}\rvert \leq \epsilon/8$ is bounded by
\begin{align*}
\log N_{w}\leq K_n\biggl[  1+\log \biggl(\frac{1+\epsilon/8}{\epsilon/8} \biggr)\biggr].
\end{align*}
Note that the number of balls of radius $\delta R$ used to cover a Euclidean ball centered at the origin of radius $R$ in $\mathbb{R}^m$ is bounded by $(3/\delta)^m$. Therefore, the number of balls of radius $\epsilon[2\pi\lambda_1(\widetilde{\Sigma}_1 )]^{1/2}/32$ needed to cover $\lVert\theta\rVert\leq \bar{\theta}_n$ is bounded by $(96 \bar{\theta}_n/\epsilon  [2\pi\lambda_1(\widetilde{\Sigma}_1 )]^{1/2})^m$. Then one can always find $\theta_1^i,\dots,\theta_{K_n}^i$, $i=1,2$, such that
\begin{align*}
\max_{j=1,\dots, K_n}\sqrt{\frac{2}{\pi\lambda_1(\widetilde\Sigma_j^1)}} \lVert\theta_j^1-\theta_j^2\rVert \leq \frac{\epsilon}{8}.
\end{align*}

Note that $\mu_{j}^1$ and $\mu_j^2$ do not necessarily have the same dimension. We view them as elements in $\mathbb R^m$ by filling the last $m-d_j^i$, $i=1,2$, elements with $0$. Then one can show that  $\lVert U_j^1\mu_j^1-U_j^2\mu_j^2  \rVert\leq \lVert \mu_j^1-\mu_j^2  \rVert. $ Then the covering number for $\mu$ is bounded by $(96 \bar{\mu}_n/\epsilon  [2\pi\lambda_1(\widetilde{\Sigma}_1 )]^{1/2})^m$ with which one can find $\mu_1^i, \dots,\mu_{K_n}^i$, $i=1,2$, such that
\begin{align*}
\max_{j=1,\dots, K_n}\sqrt{\frac{2}{\pi\lambda_1(\widetilde\Sigma_j^1)}} \lVert U_j^1\mu_j^1-U_j^2\mu_j^2  \rVert \leq \frac{\epsilon}{8}.
\end{align*}

Note that for $\Theta\in \boldsymbol\Theta_n$, all the eigenvalues of $\widetilde{\Sigma}$ lie in the interval $[h_n^2, M^2]$. We divide the $m$-dimensional cube $[h_n^2, M^2]^m$ into smaller cubes such that there are $N_{\boldsymbol\lambda}$ grid points for the $m$ eigenvalues $\boldsymbol{\lambda}=(\lambda_1,\dots, \lambda_m)$  of $\widetilde{\Sigma}$.  Note that for any $\widetilde{\Sigma}_1$ whose eigenvalues fall into one of the cubes in the grid, one can always find $\widetilde{\Sigma}_2$ such that
$\widetilde{\Sigma}_1^{-1}-\widetilde{\Sigma}_2^{-1}$ is positive definite.
For example, one may take $\widetilde{\Sigma}_2^{-1}=\widetilde{\Sigma}_1^{-1}-\tilde{\epsilon}I_m$ where $\tilde\epsilon$ is small enough and $0< \tilde{\epsilon} < \min\{1/\lambda_1( \widetilde{\Sigma}_1 ), \dots, 1/\lambda_m ( \widetilde{\Sigma}_1)\}$.  Then
\[
\exp\bigl[-\tfrac{1}{2} (x- \phi^2(\mu_j))^\mathsf{T}(\widetilde\Sigma_j^1)^{-1}(x- \phi^2(\mu_j)) \bigr]  \leq \exp\bigl[-\tfrac{1}{2} (x- \phi^2(\mu_j))^\mathsf{T}(\widetilde\Sigma_j^2)^{-1}(x- \phi^2(\mu_j)) \bigr].
\]
Then from the above inequality and \eqref{eq-Sigma}, one has
\begin{align*}
\int_{\mathbb{R}^m}&\left\lvert \mathcal{N}_m(\phi^2(\mu_j),\widetilde\Sigma_j^1)- \mathcal{N}_m(\phi^2(\mu_j),\widetilde\Sigma_j^2)\right\rvert \, dx\\
&\leq  \frac{2^{m+1}}{(2\pi)^{m/2}}\int_{[0,\infty)^m}\biggl(  \frac{1}{\det(\widetilde\Sigma_j^1)^{1/2}}-\frac{1}{\det(\widetilde\Sigma_j^2)^{1/2}} \biggr) \exp\bigl[-\tfrac{1}{2} (x- \phi^2(\mu_j))^\mathsf{T}(\widetilde\Sigma_j^2)^{-1}(x- \phi^2(\mu_j)) \bigr] \,dx\\
&\leq 2^m\frac{\det(\widetilde\Sigma_j^2)^{1/2}-\det( \widetilde\Sigma_j^1)^{1/2}}{\det( \widetilde\Sigma_j^1)^{1/2}}.
\end{align*}

We divide the range of each of the eigenvalues $[h_n^2, M^2]$ into $L$ equidistant intervals and let $\lambda_{jl}=h_n^2(1+\epsilon/2^{m+3})^{2l_j/m}$ where $j=1,\dots, m$ and $ 1\leq l_j\leq L$. We pick $L$ to be the smallest integer which satisfies $h_n^2(1+\epsilon/2^{m+3})^{2l_j/m}\geq M^2$. We pick the $j$th eigenvalue of $\widetilde\Sigma_j^1$ and $\widetilde\Sigma_j^2$ to be in some interval $[\lambda_{j(l-1)},\lambda_{jl}]$ and satisfying the ordering on the eigenvalues. Then one has
\[
2^m\frac{\det(\Sigma_j^2)^{1/2}-\det(\Sigma_j^1)^{1/2}}{\det(\Sigma_j^1)^{1/2}}\leq 2^m \frac{\prod_{j=1}^m h_n^2(1+\epsilon/2^{m+3})^{2l_j/m}-\prod_{j=1}^m h_n^2(1+\epsilon/2^{m+3})^{2(l_j-1)/m} }{\prod_{j=1}^m h_n^2(1+ \epsilon/2^{m+3})^{(2l_j-1)/m} }\leq \frac{\epsilon}{8},
\]
and the metric entropy of $\boldsymbol{\lambda}$ in $[h_n^2, M^2]^m$ is bounded by
\[
\log N_{\boldsymbol\lambda} \leq m\biggl[\frac{\log(M^2/h_n^2)}{\log ( 1+\epsilon/2^{m+3})}+1\biggr].
\]

Letting $\bar{\theta}_n=\bar{\mu}_n$ and combining all the terms above on the entropy numbers, the metric entropy number of $D_{n,\epsilon}$ is bounded by
\[
\log N(\delta, D_{\delta, n})\leq   K_n\biggl[  1+\log\biggl( \frac{1+\epsilon/8}{\epsilon/8} \biggr) \biggr]
+ 2m \log \biggl[\frac{96 \bar{\theta}_n}{\epsilon  (2\pi\lambda_1(\widetilde{\Sigma}_1 ))^{1/2}} \biggr]
+ m\biggl[\frac{2\log (M/h_n)}{\log ( 1+ \epsilon / 2^{m+3})}+1 \biggr].
\]

Let $K_n=c_1\sqrt{n}$, $\bar{\theta}_n=c_2\sqrt{n}$, and $h_n=c_3n^{-1/b}$ for some constants $b>0$ and $c_1,c_2 $ and $c_3$ small enough. Then one has
\begin{align*}
\log N(\delta, D_{\delta, n})\leq n\beta.
\end{align*}
It remains to verify the condition that the prior mass outside $D_{\delta, n}$ is exponentially small given our priors on the parameters and the above choice of $K_n$, $\bar{\theta}_n$, $h_n$ and $M.$ We assume multivariate normal priors  (with diagonal covariance matrices) for $\theta$ and $\mu$, thus with a choice of $\bar{r}_n$, $\bar\mu_n$, one can show that  using changing of variables and Mill's inequality,  the tail $P(\lVert\theta\rVert\geq \bar{\theta}_n)$ decays exponentially. The elements $\sigma_1^2,\dots,\sigma_K^2, \sigma^2  $  and the variance (i.e., diagonal) terms of $\Sigma_j$, $j=1,\dots, m$, are assumed to follow the i.i.d.\ Gamma priors with density $b^{a}/\Gamma(a)x^{-a-1}\exp(-b/x)$ and hyperparameters $a$ and $b$. Then by direct calculation of the integrals,
\[
P(\min(\boldsymbol\sigma)\leq h_n)\leq c_4n\exp(-\tilde C_4h_n^{-b})\leq c_4\exp(-\tilde c_4n+\log n),
\]
which decays exponentially fast. 
\end{proof}

\section{Results on real and simulated data}
\label{data}

We illustrate the utility of the embedding using three data analysis
examples. The first example involves synthetic data with simple geometric structure to contrast the performance of our method with $k$-means clustering,
a Gaussian mixture model, and a Bayesian factor model.  In the second
example, compare the performance of our model with a logistic model, a
Gaussian mixture model, as well as a factor model on three supervised
classification problems from the UCI machine learning repository \cite{UCI}. The last example compares the spherical topic model we developed to
a latent Dirichlet allocation model on a corpus of NSF award abstracts \cite{NSF}.

\subsection{Line intersecting a plane}

Possibly the simplest example of a mixture of subspaces is a line puncturing a
plane. We will use this example to illustrate basic properties of the
mixture of subspaces as well as explore comparisons to comparable
models. We will study how well we can cluster the observed points into
those sampled from the plane or line respectively. The models we
compare are $K$-mans clustering ($K$-means), a mixture of normals
(GMM), a mixture of non-parametric factor models (MFM)
\cite{carvalho08}, our mixture of subspaces model with variable 
dimensions (MSM), and our mixture of subspaces model with dimension 
fixed to to $d=2$ (MSM $d=2.2$). For the subspace model we set the 
temperature parameter for the Gibbs posterior to $10^{-6}$, and 
acceptance rates between 38\% and 48\% were achieved for the 
subspace and affine mean parameters.
  
The mixture model for a line intersecting a plane in $\mathbb{R}^3$
comprises two components: a subspace $\mathbf{U}_1$ corresponding
to a line and a subspace $\mathbf{U}_2$ corresponding to the plane. 
Although simple, this example can be challenging situation to
infer. To understand the effect of uncertainty of subspace
measurements on accuracy of models we add a isotropic noise around
the subspaces via a precision parameter.

The data is specified by the following distribution with the following five values for the precision parameter of the isotropic noise around the subspaces, $\nu = [10, 5, 1, 0.5, 0.2]$:
\begin{align*}
&\text{\textsc{Line}}  & &\text{\textsc{Plane}} \\
U_{1} &\sim\operatorname{Unif}(\operatorname{V}(1,3)), &U_{2}  &\sim \operatorname{Unif}(\operatorname{V}(2,3)),\\
\mu_{1} &\sim \mathcal{N}_1(0,1),  &\mu_2 &\sim \mathcal{N}_2(0,I),\\
\Sigma^{-1}_{1}&\sim \operatorname{TGa}(1,1,\nu), &\operatorname{diag}(\Sigma^{-1}_{2})  &\iid \operatorname{TGa}(1,1,\nu),\\
(I - U_1U_1^\mathsf{T})^{-1}\theta_1  &\sim \mathcal{N}_3(0,I), & (I_3 - U_2U_2^\mathsf{T})^{-1}\theta_2  &\sim \mathcal{N}_3(0,I),
\end{align*}
where $\operatorname{TGa}(1,1,\nu)$ is a left truncated Gamma truncated at precision $\nu$. Given these parameters for the two mixture components we specify the following two conditional distributions
\begin{align*}
x \mid \text{Line} &\iid \mathcal{N}_3(U_1\mu_1 + \theta_1, U_1(\Sigma_1 -\sigma^2_1)U_1^\mathsf{T}+\sigma^2_1 I),\\
x \mid \text{Plane} &\iid \mathcal{N}_3(U_2\mu_2 + \theta_2, U_2(\Sigma_2 -\sigma^2_1 I)U_2^\mathsf{T}+\sigma^2_1 I).
\end{align*}
We generated $500$ observations from both the line and the plane, see Figure~\ref{fig2}. For each of the five variance levels, ten data sets were generated, and a holdout set of $50$ observations from the line and plane.

A comparison of clustering accuracy of the five models is summarized
in Table~\ref{simdata}. We report the range in clustering accuracy for
eaach method on the holdout set over the ten runs. We conclude from
Table~\ref{simdata} that: (1) $K$-means performs poorly, (2) as the
precision parameter increases the performance of the GMM improves and
starts to approach the MFM and MSM results, (3) the MSM with variable
dimension outperforms the MSM with fixed dimension, and (4) the MSM
and MFM results are very similar. Note that the MSM provides more
geometric information inculding the dimension of the subspace.


 \begin{table}[hbt]
  \caption{Range of cluster assignment accuracy for the simulated data
for the five methods.}
 \label{simdata}
\begin{center}
\begin{tabular}{|c|c|c|c|c|c|}
\multicolumn{4}{c}{\textsc{Synthetic Data}} \\
\cline{1-6}
Precision & $K$-means  & MSM  & MSM $d=2,2$ & GMM &  MFM  \\
\hline
$0.1$ 		&$(0.66,0.77)$ &  $(0.95,0.99)$   & $(0.92, 0.97)$  & $(0.89,0.97)$ & $(0.95, 0.99)$\\
$0.5$   		& $(0.64,0.82)$&   $(0.90,0.98)$ & $(0.90, 0.96)$ & $(0.87,0.98)$ &  $(0.91, 0.99)$ \\
$1$    		& $(0.57,0.70)$&   $(0.87,0.98)$ &  $(0.86, 0.97)$ & $(0.85,0.98)$ & $(0.88, 0.98)$ \\
$2$    		&  $(0.64,0.72)$& $(0.87,0.96)$  &  $(0.87, 0.95)$ & $(0.87,0.95)$   & $(0.86, 0.98)$\\
$5$     		& $(0.59,0.80)$	&    $(0.84,0.97)$ & $(0.84, 0.97)$  & $(0.84,0.97)$  &$(0.83, 0.97)$\\
\hline
\end{tabular}
\end{center}
\vspace{.05in}

\end{table}

\begin{figure}[h!]
\label{fig2}
\begin{center}
\includegraphics[scale=.45]{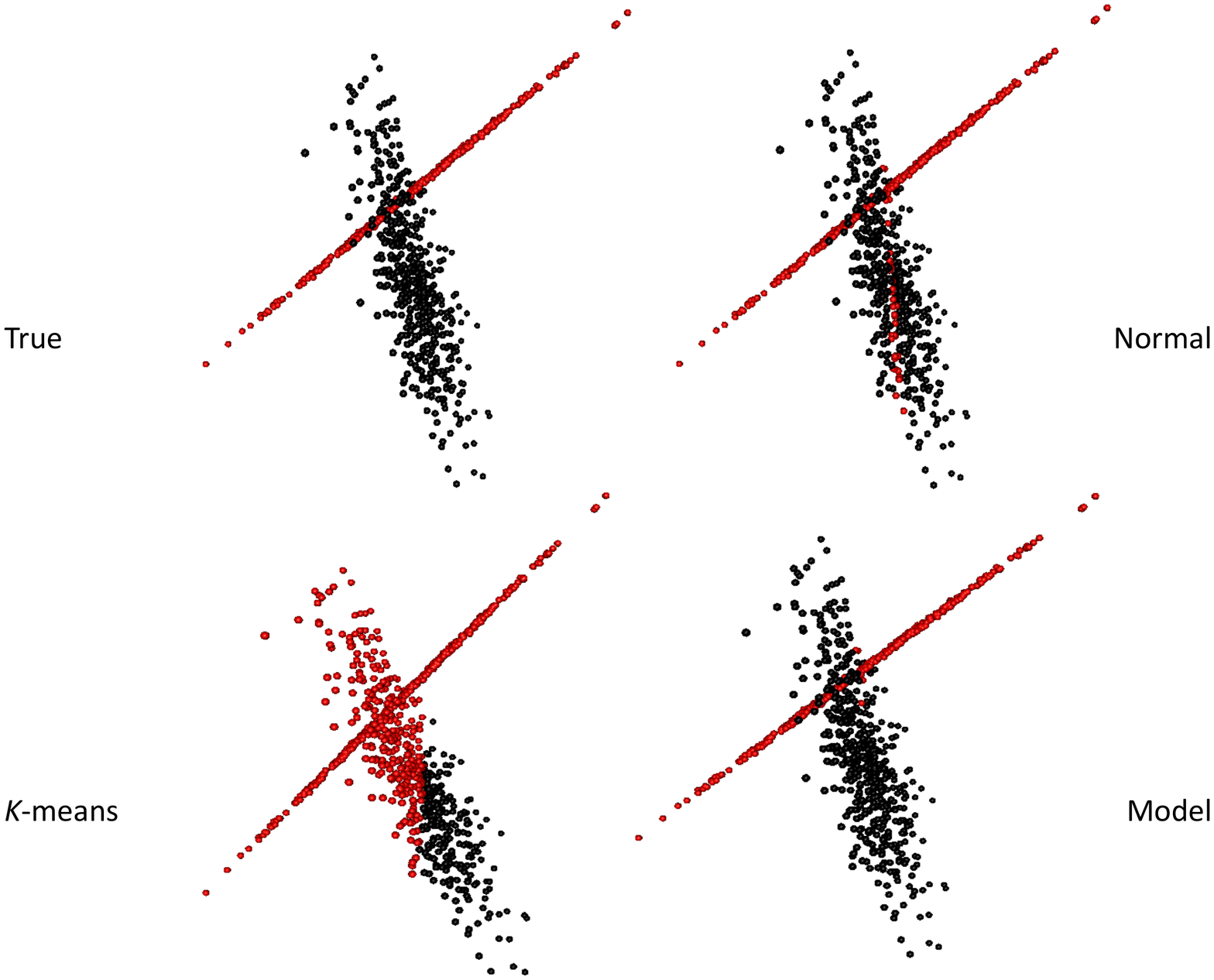}
\end{center}
\vspace{-.1in}
 \caption{The upper-left panel displays the true cluster assignments,
   the upper-right panel displays the assignemnts given by the GMM,
   the lower-left panel displays the the $K$-means assignment, and
the lower-right panel displays the assignmets by the MSM.}
 \end{figure}


\subsection{Classification on UCI data}

To study the clustering performance on more realistic data we examined
the classification accuracy on three data sets from the UCI Data 
Repository: the Statlog Vehicle Silhouettes data \cite{vehicle}, the 
Wisconsin Breast Cancer data \cite{breast}, and the Statlog Heart 
data \cite{heart}. Our metric of success on all three data was holdout
classification accuracy. We compared five models: a (multinomial)
logit model (Logit), our mixture of subspaces model with variable 
dimensions (MSM), our mixture of subspaces model with fixed 
dimensions (MSM $d=(5,2,2)$), and a mixture of non-parametric 
factor models (MFM) \cite{carvalho08}.

For the subspace models the temperature parameter of the Gibbs
posterior was set to obtain an acceptance ration in  in the range of 
20-40\% during the burn-in period. We did not use a cross-validation
criteria to set the temperature parameter due to computational burden.
To compute predictive accuracy we use the maximum a posterior estimate
of our MCMC runs to classify a new point.

The Heart Data Set contains $270$ observations of two classes with
$13$ covariates, the Vehicle Data Set contains $846$ observations of
four classes on $18$ covariates, and the Breast Cancer Data Set
contains $569$ observations of two classes on $30$ covariates. For
each dataset we measured the test error on a holdout set of 10\% of
the data. We repeated the test error estimates ten times and report
the range in test errors in Table~\ref{classres} for the results.

We conclude from Table~\ref{classres} that: the MSM with variable
dimension outperforms the MSM with fixed dimension, the mixture of
subspaces and mixture of factors perform as well or better than the
logit model which is the only supervised method, MSM and MFM have very
comparable performance.

\begin{table}[h]
\caption{Range of cluster assignment accuracy for the three data sets
  using five models on holdout data.}
\centering
\begin{tabular}{|c|c|c|c|c|c|}
\multicolumn{6}{c}{\textsc{Real Data}}\\
\cline{1-6}
Data Set & Logit & MSM & MFM &  MSM $d = (5,2,2)$ & GMM \\
\hline
Breast	& $(0.78, 0.86)$ 	& $(0.89, 0.94)$ & (0.90, 0.96) & (0.83, 0.89)	& $(0.64, 0.70)$ \\
Heart 	& $(0.72, 0.78)$ 	& $(0.77, 0.81)$ & (0.80, 0.82) & (0.73, 0.77)	& $(0.56, 0.60)$ \\
Vehicle 	& $(0.46, 0.59)$ 	& $(0.77, 0.83)$ & (0.76, 0.85) &  (0.75, 0.80)   & $(0.74, 0.79)$\\
\hline
\end{tabular}
\bigskip
\label{classres}
\vspace{.05in}

\end{table}

Our subspace models allows for an estimate of the dimension of the
linear subspace. This is not possible for either the Bayesian mixture models 
proposed in \cite{page13}, the non-parametric mixture of factor models
proposed in \cite{carvalho08}, or the penalized cost based mixture of
subspaces model \cite{LerZha2010}. In Table~\ref{dimvals} we state the
posterior probabilities estimates of the dimension of the subspaces.

\begin{table}[h!]
\caption{Posterior probabilities for the dimension of the subspace of clusters in each data set.}
 \label{dimvals}
\centering
\begin{tabular}{|ccc|}
\multicolumn{3}{c}{\textsc{Breast}} \\
\cline{1-3}
Class    & Dim. & Post. Prob.  \\
\hline
$1$     & $5$    & $0.45$     \\
$1$     & $6$    & $0.53$    \\
\hline
$2$       & $4$     & $0.66$     \\
$2$       & $5$     & $0.34$     \\
\hline
\end{tabular} \qquad 
\begin{tabular}{|ccc|}
\multicolumn{3}{c}{\textsc{Heart}} \\
\cline{1-3}
Class    & Dim. & Post. Prob.  \\
\hline
$1$     & $1$    & $0.10$  \\
$1$     & $2$    & $0.78$     \\
$1$     & $3$    & $0.12$    \\
\hline
$2$       & $1$     & $0.79$     \\
$2$       & $2$     & $0.21$     \\
\hline
\end{tabular} \quad
\begin{tabular}{|ccc|}
\multicolumn{3}{c}{\textsc{Vehicle}} \\
\cline{1-3}
Class    & Dim. & Post. Prob.  \\
\hline
$1$     & $1$    & $0.69$    \\
$1$     & $2$    & $0.31$    \\
\hline
$2$       & $1$     & $0.76$     \\
$2$       & $2$     & $0.24$     \\
\hline
$3$      & $1$     & $0.78$     \\
$3$       & $2$     & $0.22$     \\
\hline
$4$       & $1$     & $0.93$     \\
$4$       & $2$     & $0.07$     \\
\hline
\end{tabular}
\bigskip

\end{table}

\subsection{Analysis of NSF award abstracts}

In this subsection we compare the topic model proposed in 
Section \ref{topicmodels} to the standard latent Dirichlet allocation
model. The corpus we use to compare the two methods consists of 13,092 
abstracts from NSF awards in 2010 \cite{NSF}. The vocabulary was
constructed using the tokenizer from the Mallet package with 
bi-gram extraction \cite{MALLET}. The vocabulary was then reduced to
the to terms that were within the top $10\%$ term frequency�inverse document frequency
metric \cite{wu08} and occurred in at least five documents. The resulting
vocabulary consisted of $78,343$ terms. The average length of the 
documents after trimming the vocabulary was 379 words.

We compared the spherical topic model specified in \eqref{SAMmod} to the a standard LDA model with 20 topics.
We fixed the LDA model to have 20 topics. It was pointed out in \cite{reisinger10} that a direct comparison of topic
models and spherical topic models is not possible/meaningful. We instead examine the topic coherence and most 
relevant words in each topic. Example word clouds are displayed in Figures \ref{fig:ldacloud} and \ref{fig:samcloud}. 

One thing to notice about the output of the spherical topic models is that when investigating the topics, the positive and the negative parts of the topic vectors tend to be thematically coherent (also noted by \cite{reisinger10}). This is interesting because in a sense it allows a denser representation of the topics. One interesting result of the model specified in \eqref{SAMmod} 
on the NSF abstract data is that the positive and negative components of the topics tend to relate to broader impact terms 
and field or discipline terms respectively. It may not be surprising that writers of grants from different disciplines use different goals in broader impacts, however the spherical topic model structure gives us a tool for making thematic connections that would not be obvious from simply looking at the top terms of a topic.

Unlike standard admixture models, our spherical topic model allows for the inference on the number of topics. In Figure \ref{fig:ntopdist} we display the posterior distribution over the number of topics which is centered around $30$.

\begin{figure}
\includegraphics[scale = .1]{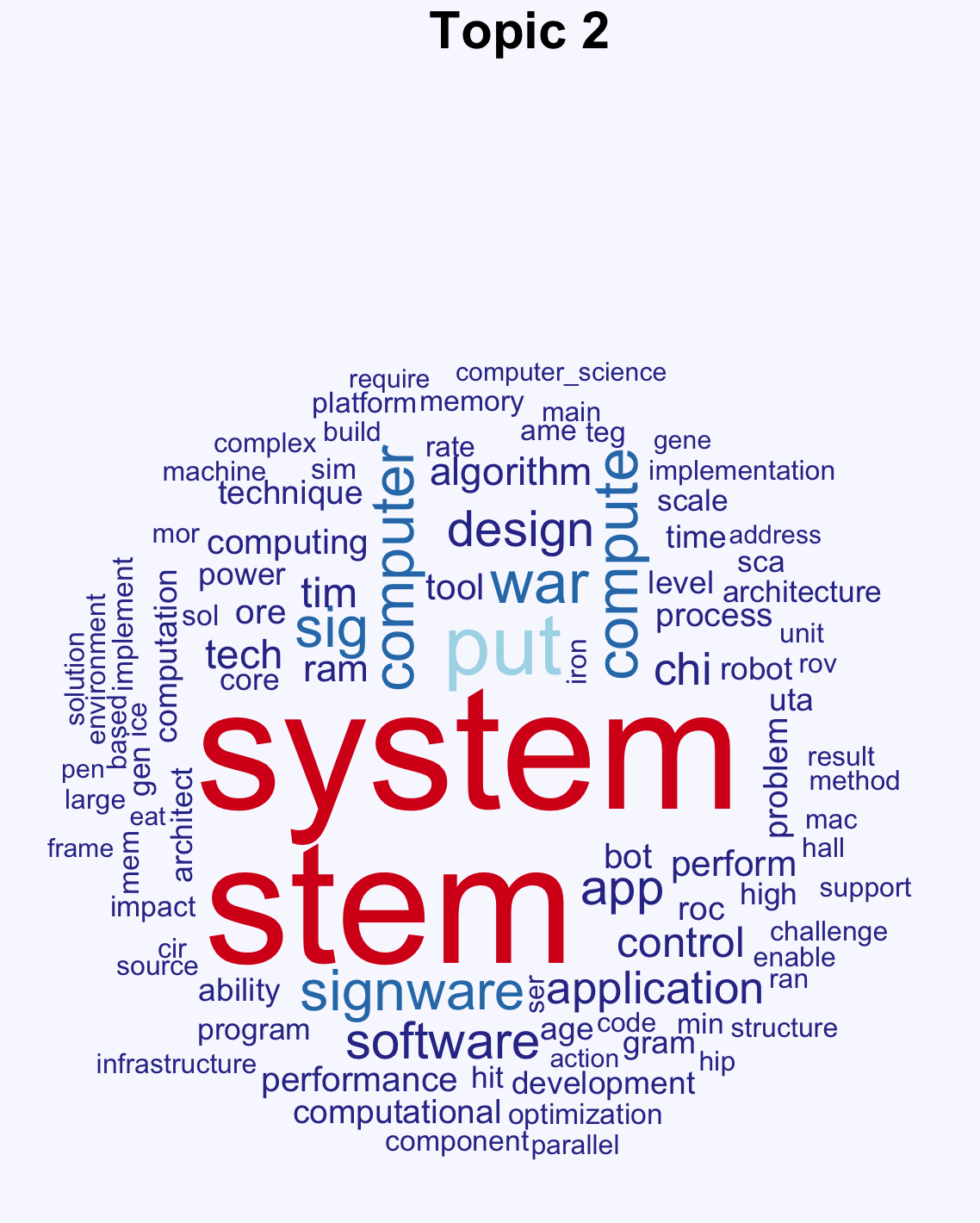}
\includegraphics[scale =.1]{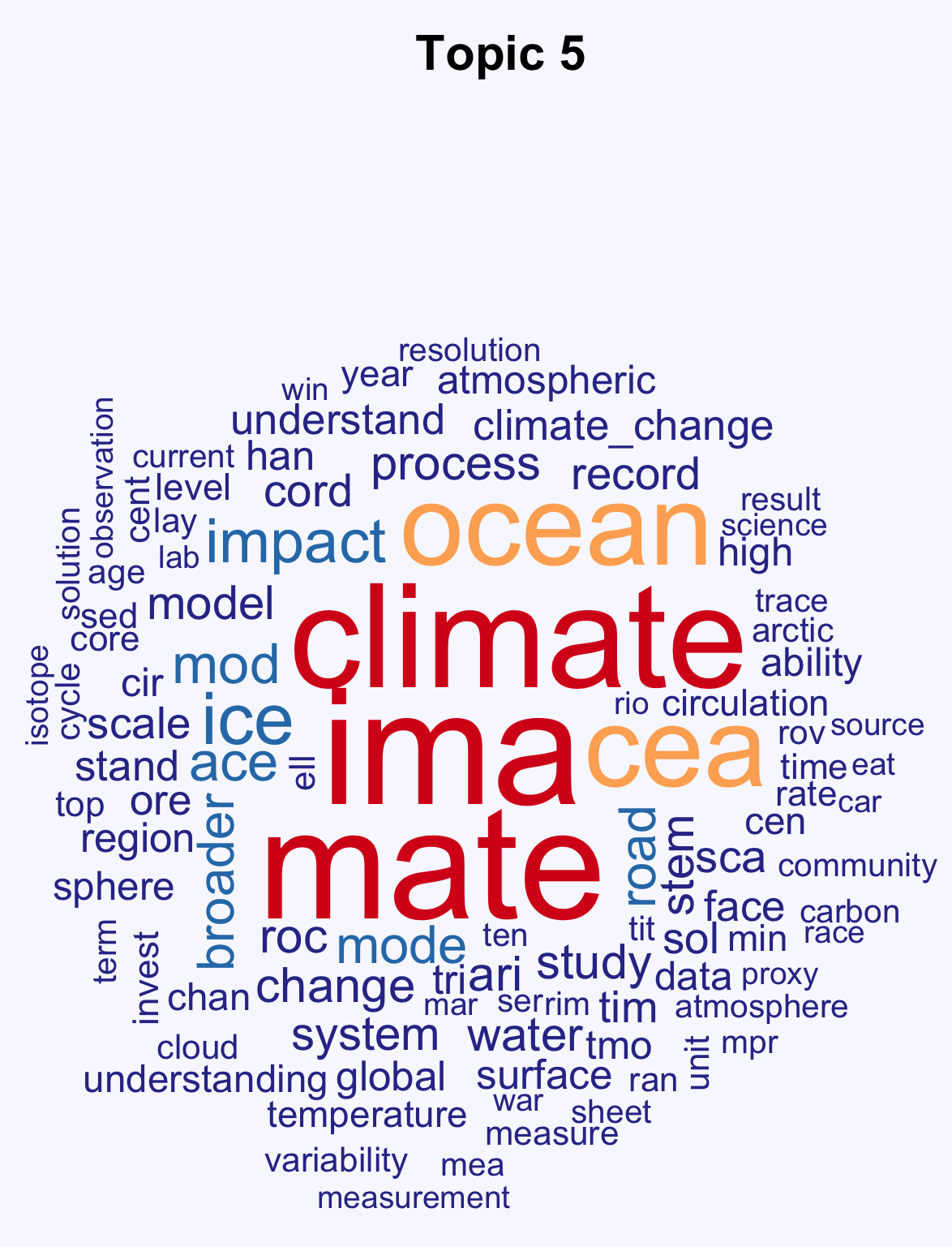}
\includegraphics[scale = .1]{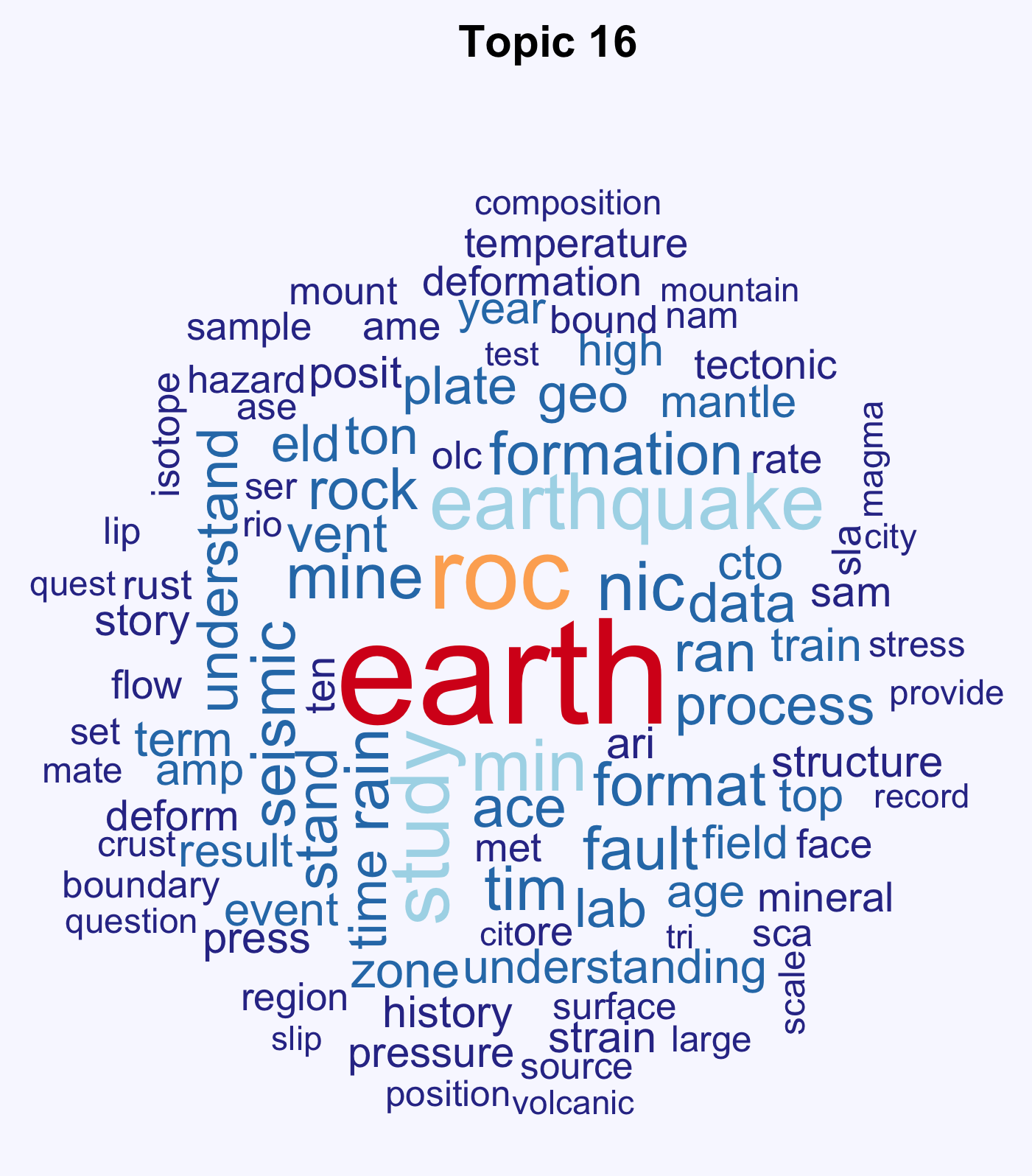}
\caption{Topics from LDA applied to 13,092 NSF awards. The size of the terms correspond to the value of the term in the topic.}
\label{fig:ldacloud}
\end{figure}

\begin{figure}[h!]
\includegraphics[scale = .1]{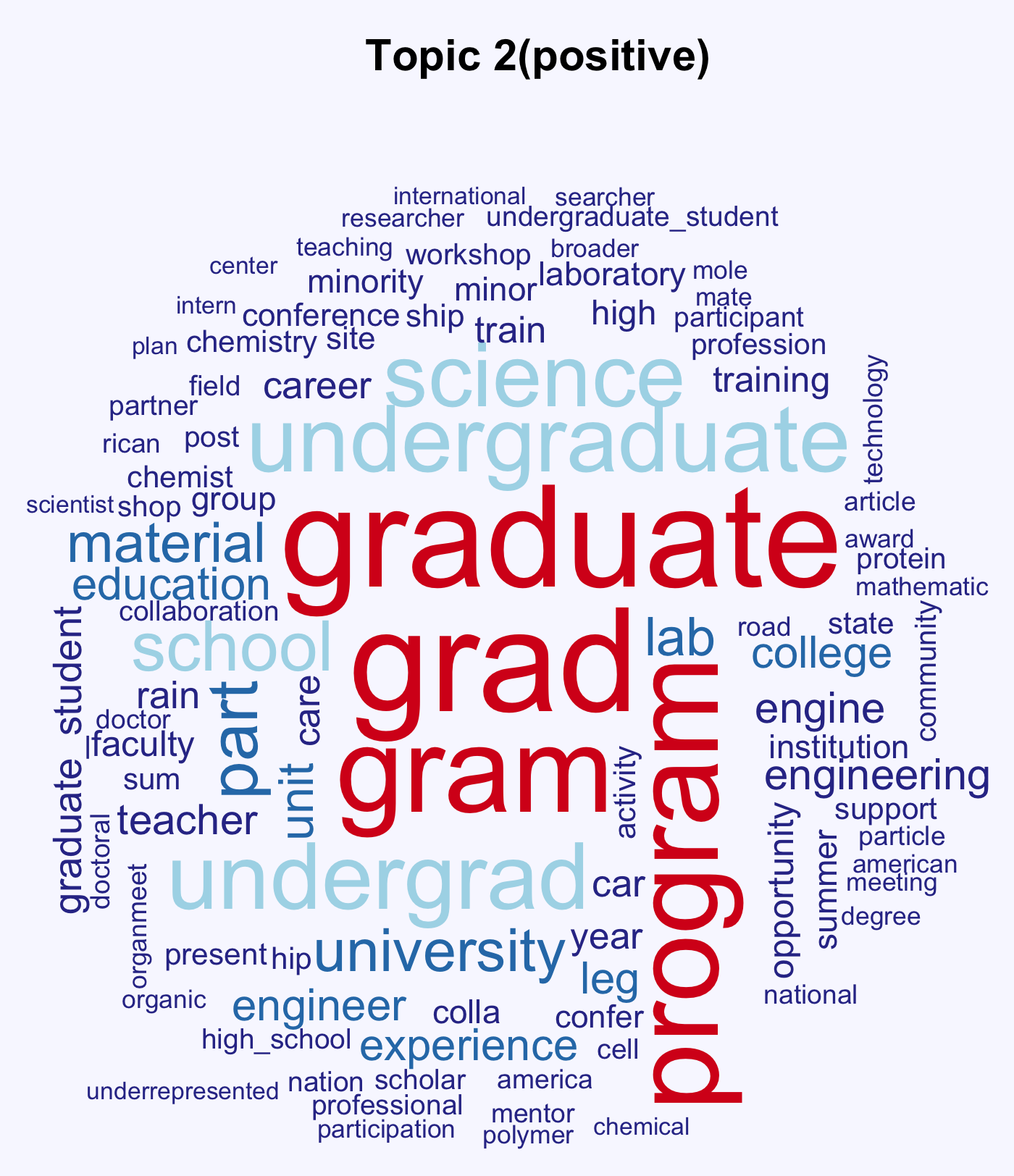}
\includegraphics[scale =.1]{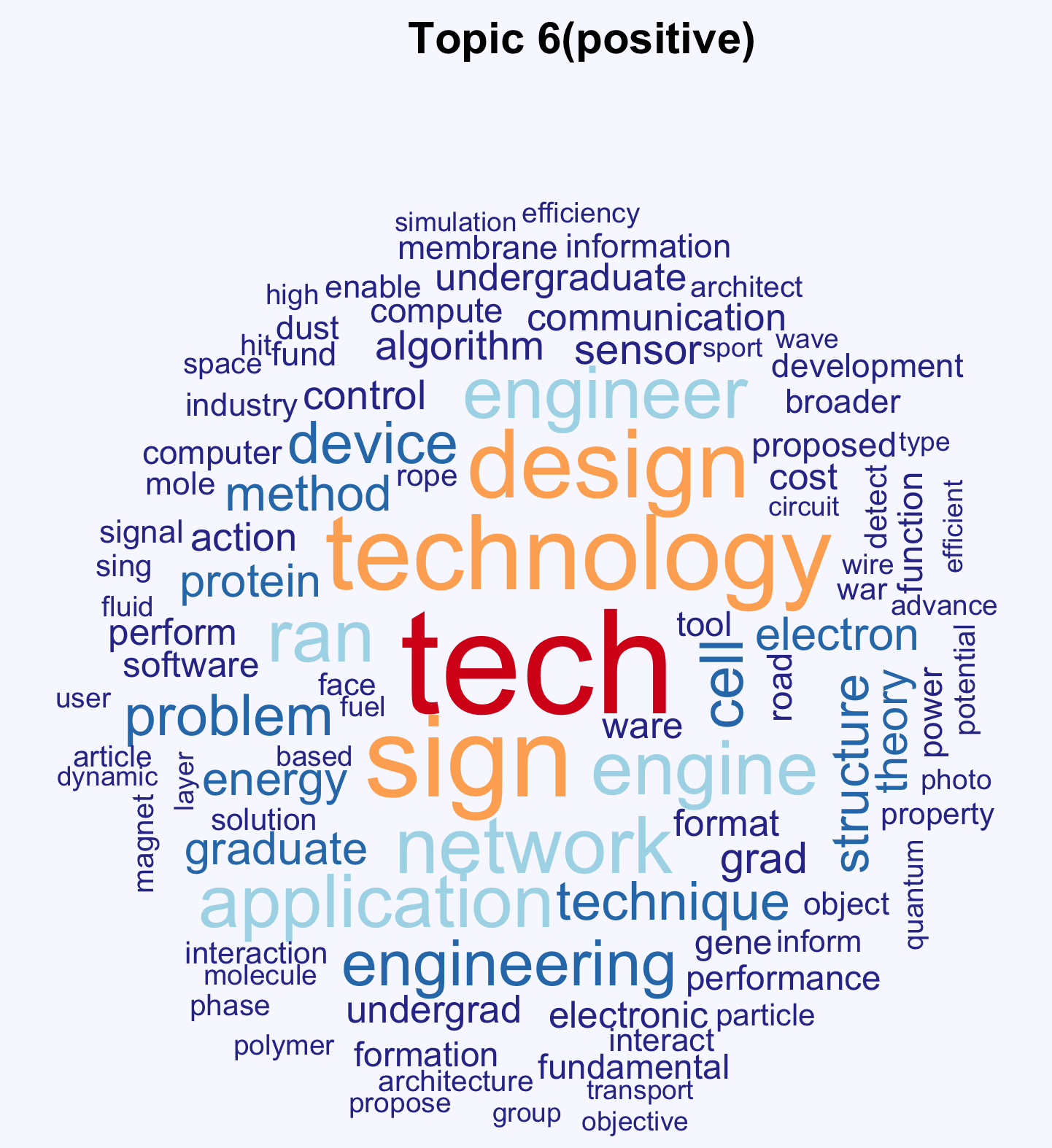}
\includegraphics[scale = .1]{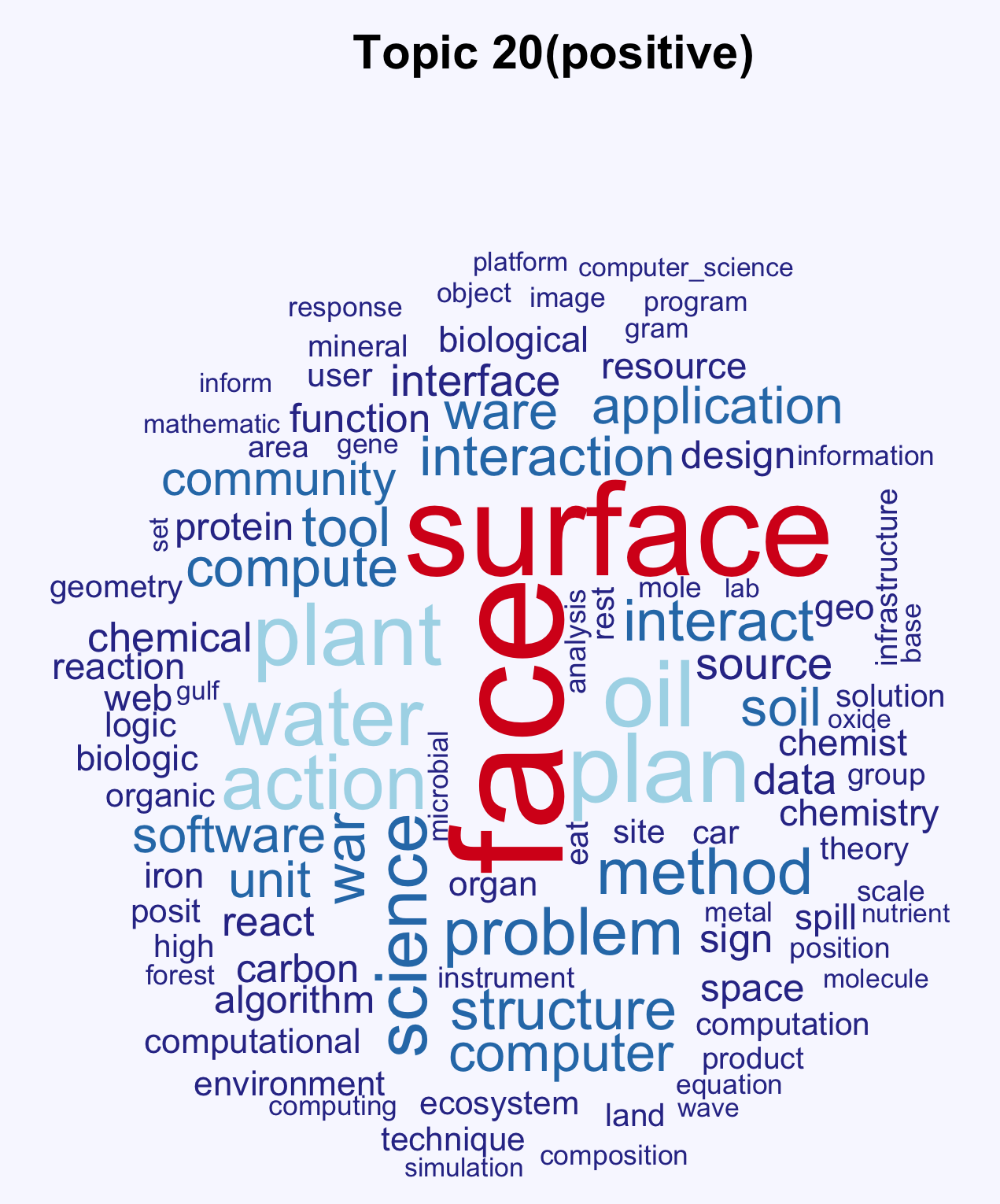}

\includegraphics[scale = .1]{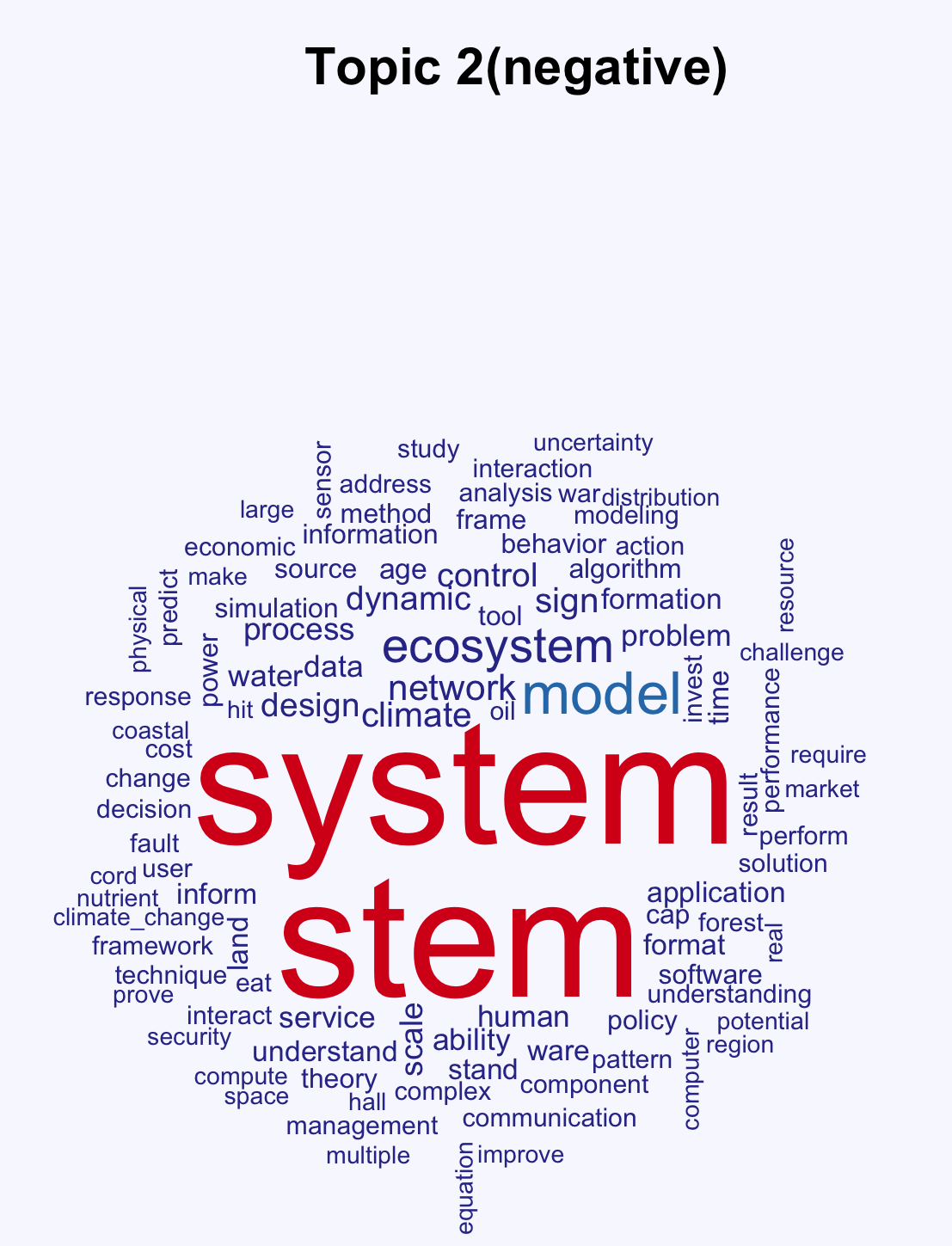}
\includegraphics[scale =.1]{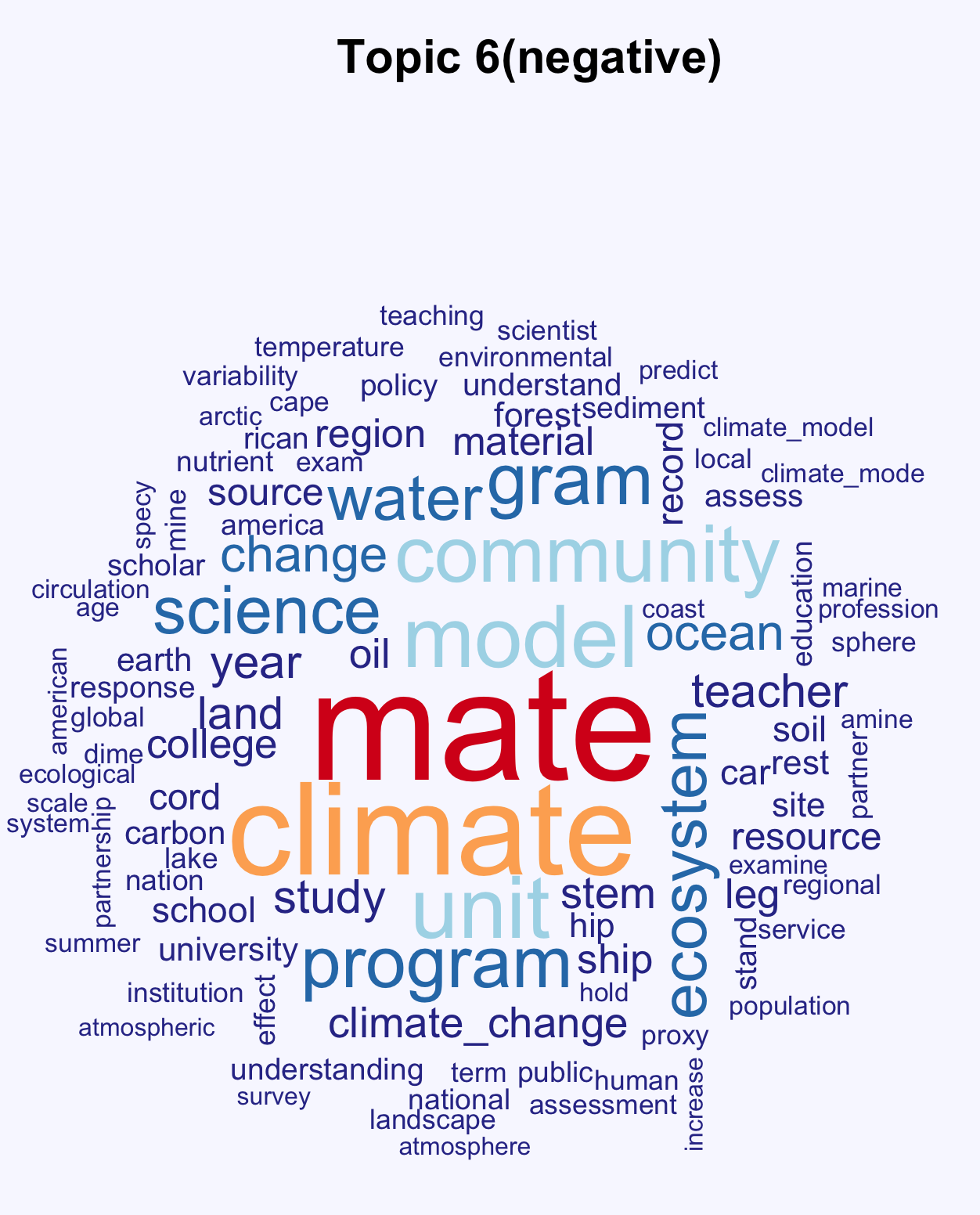}
\includegraphics[scale = .1]{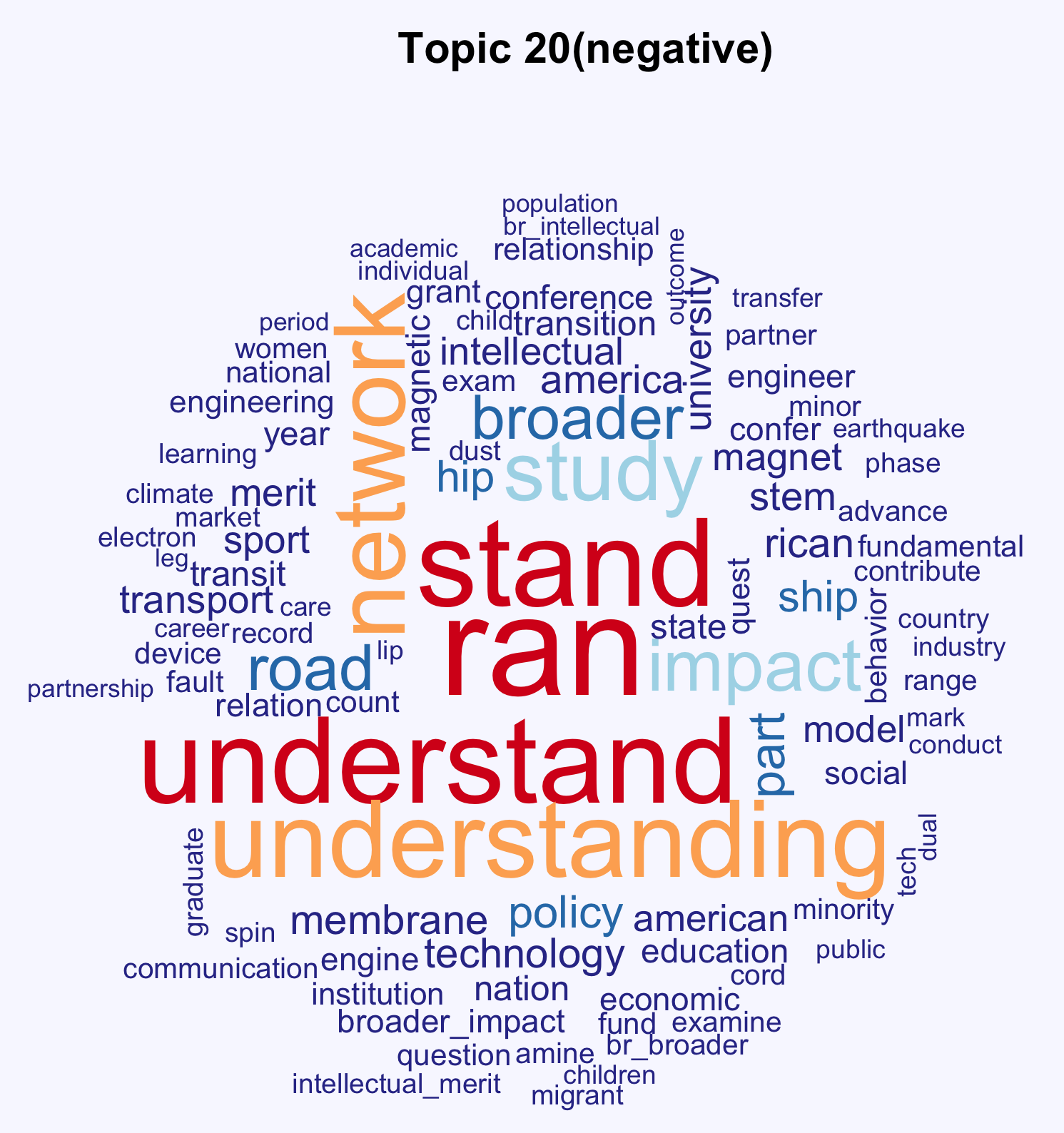}
\caption{Topics for the model specified in \eqref{SAMmod}. The size of the terms correspond to the absolute value of the terms in the topics. We display both the positive and negatively weighted topic values.}
\label{fig:samcloud}
\end{figure}

\begin{figure}
\begin{center}
\includegraphics[scale=.8]{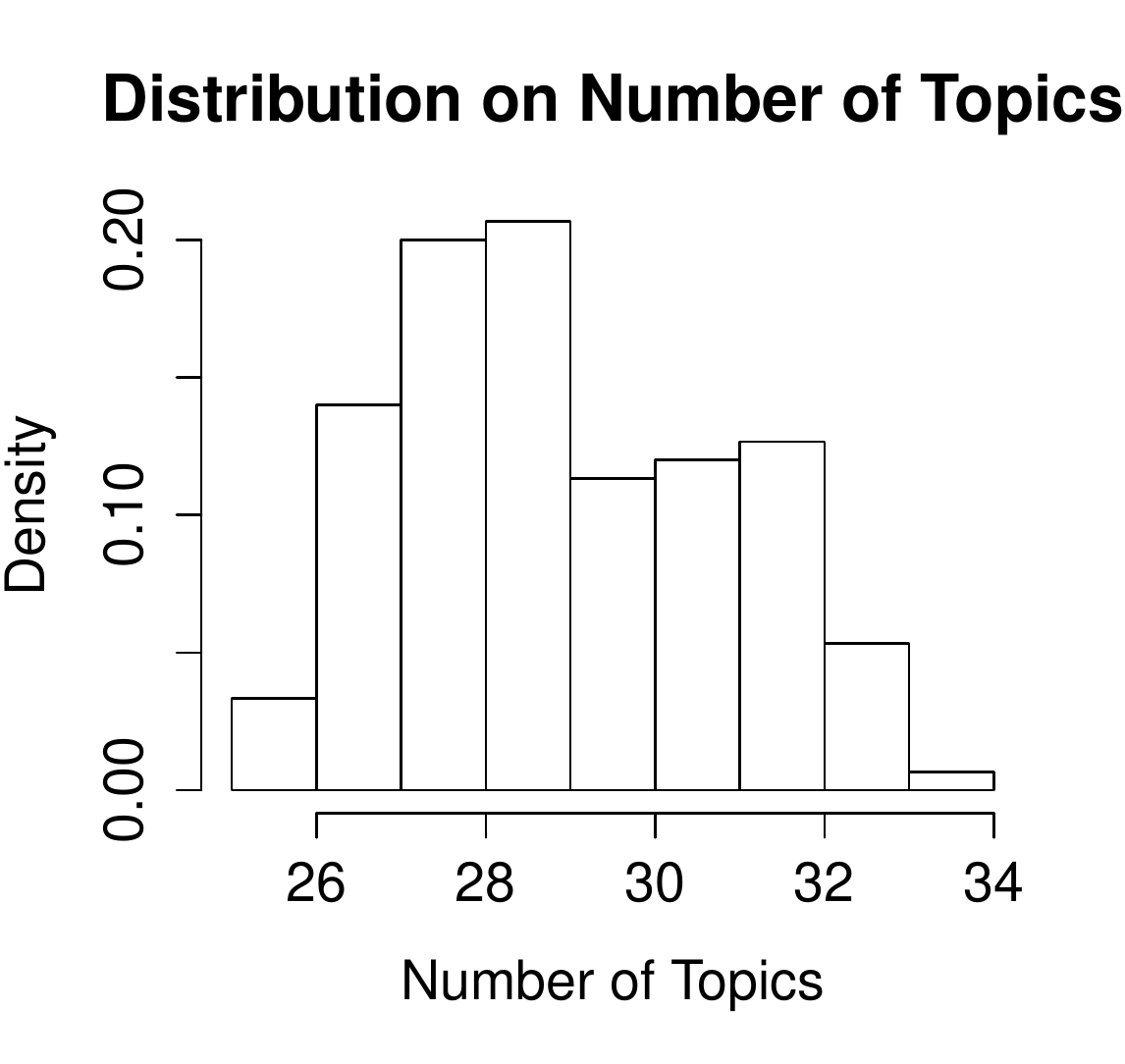}
\caption{Distribution on the number of topics inferred by the model. The uncertainty in the number of topics is greater than in the mixture of subspaces applications in the previous subsections.}
\label{fig:ntopdist}
\end{center}
\end{figure}


\section{Discussion}

We present a method for learning or inferring mixtures of linear subspaces of different dimensions. We show how this model can be trivially adapted for admixture modeling. The key idea in our procedure was using the observation that subspaces of different dimensions can be represented as points on a sphere is very useful for inference. The utility of this representation is that sampling from a sphere is straightforward, there exists a distance between subspaces of different dimensions that is differentiable and can be computed using principal angles, we avoid MCMC algorithms that jump between models of different dimensions. We suspect that this idea of embedding or representing models of different dimensions by embedding them into a common space with a distance metric that allows for ease of computation and sampling as well as nice analytic properties may also be of use in other settings besides subspaces.

Scaling our estimation procedure to higher dimensions and more samples will require greater computational efficiency and an EM-algorithm for this model holds promise. It is also of interest to examine if we can replace the Gibbs posterior with an efficient fully Bayesian procedure.

\section*{Acknowledgements}
SM and BST would like to thank Robert Calderbank, Daniel Runcie, and Jesse Windle for useful discussions. SM is pleased to acknowledge support from grants NIH (Systems Biology) 5P50-GM081883, AFOSR FA9550-10-1-0436, and NSF CCF-1049290. BST is pleased to acknowledge support from NSF grant DMS-1127914 to the Statistics and Applied Mathematics Institute. The work of LHL is partially supported by AFOSR FA9550-13-1-0133, NSF DMS-1209136, and NSF DMS-1057064. This work of LL  is supported by  a DUKE iiD grant and grant R01ES017240 from the National Institute of Environmental  Health Sciences (NIEHS) of the National Institute of Health (NIH).


\end{document}